\documentclass{siamart251216}

\usepackage[T1]{fontenc}
\usepackage[utf8]{inputenc}
\usepackage{microtype}
\usepackage{amsmath,amssymb,mathtools,mathrsfs}
\usepackage{booktabs}
\usepackage{graphicx}
\usepackage{float}
\usepackage{placeins}
\usepackage{needspace}
\makeatletter
\@ifundefined{algorithm}{\usepackage{algorithm}}{}
\makeatother
\usepackage{algpseudocode}
\algrenewcommand\algorithmicrequire{\textbf{Input:}}
\algrenewcommand\algorithmicensure{\textbf{Output:}}

\newsiamthm{assumption}{Assumption}
\newsiamremark{remark}{Remark}

\crefname{assumption}{Assumption}{Assumptions}
\crefname{algorithm}{Algorithm}{Algorithms}

\newcommand{\Qir}{\mathcal Q_{\infty}}
\newcommand{\Qirs}{\mathcal Q_{\infty}^*}
\newcommand{\Qlr}{\mathcal Q_{\ell}}
\newcommand{\Qlrs}{\mathcal Q_{\ell}^*}
\newcommand{\norm}[1]{\left\lVert #1\right\rVert}
\newcommand{\ii}{\mathrm i}
\newcommand{\supp}{\operatorname{supp}}

\newcommand{\TheTitle}{An adaptive localized orthogonal decomposition method
for Helmholtz problems}

\headers{Adaptive localized orthogonal decomposition}{Y. Li and C. Que}
\title{\TheTitle}
\author{
Yuwen Li\thanks{School of Mathematical Sciences, Zhejiang University, 866 Yuhangtang Road, Hangzhou 310058, People's Republic of China
(\email{liyuwen@zju.edu.cn}, \email{qcx@zju.edu.cn}).
\funding{This work was supported by the National Key R\&D Program of China
under grant 2024YFA1012600.}}
\and
Chuxin Que\footnotemark[1]
}

\begin{document}

\maketitle

\begin{abstract}
We develop an adaptive localized orthogonal decomposition (ALOD) method for
Helmholtz problems with localized oscillations and singularities. The main result is the first wavenumber-robust residual a
posteriori error estimate for a Helmholtz LOD discretization within the
standard resolution, oversampling, and discrete-stability regime. The
estimate is two-sided relative to the fine-grid solution and includes a
computable bound for the corrector-localization error, enabling separate
adaptation of the online coarse space, the fine corrector mesh, and the
oversampling level. For a prescribed load family, we reuse the local residual
Riesz representatives already computed by the indicator as regional additive
Schwarz corrections. Their span forms a shared enrichment, which is then compressed by energy-weighted proper orthogonal decomposition (POD) for online use.
Thus, the reduced space is generated directly by the adaptive error-control
mechanism. The error estimate remains valid for enriched solutions satisfying
Galerkin orthogonality on the original LOD test space. Kernel-lifted tests
provide stable, quasi-optimal coupling under a unified oversampling condition
without additional global fine-grid solves, and memberwise checks control
the perturbation caused by compression.
Two-dimensional experiments demonstrate online compression and low-rank reuse
for the prescribed load families.
\end{abstract}

\begin{keywords}
Helmholtz equation, localized orthogonal decomposition,
a posteriori error estimation, adaptive refinement, additive Schwarz
enrichment, many-query problems
\end{keywords}
\begin{MSCcodes}
65N15, 65N30, 65N50
\end{MSCcodes}

\section{Introduction}\label{sec:introduction}

Helmholtz problems arise in the numerical simulation of time-harmonic wave
propagation, including applications in acoustics, electromagnetics, and
seismic imaging. In inverse problems, uncertainty quantification, and design,
these applications often require repeated solutions for many sources or
incident fields.
Helmholtz problems impose two distinct resolution demands: wave propagation
is global, whereas corners and localized sources require fine resolution only
in selected regions. For standard finite elements, resolving the wavelength
does not by itself prevent pollution at large wavenumbers
\cite{BabuskaSauter1997,MelenkSauter2011,EsterhazyMelenk2012}.
This pollution reflects accumulated phase error and can remain substantial
even when the wavelength is nominally resolved. Suppressing it therefore
requires increasingly stringent global resolution as the wavenumber grows,
making repeated high-frequency simulations expensive.
Wavenumber-explicit a priori analyses quantify this effect for linear and
higher-order methods, including sharp relative-error estimates for
nontrapping and variable-coefficient scattering problems
\cite{Wu2014,DuWu2015,LafontaineSpenceWunsch2022,GalkowskiSpence2025}.

Localized orthogonal decomposition (LOD) separates these roles: local
fine-scale correctors encode unresolved oscillations, while an
operator-adapted coarse space remains online
\cite{MalqvistPeterseim2014,HenningMalqvist2014,MalqvistPeterseim2020}. For the Helmholtz equation,
the a priori theory proves stability and a first-order energy-error estimate
with constants independent of the wavenumber, provided that the coarse mesh
resolves the wavelength, the oversampling depth grows logarithmically with the
wavenumber, and the local corrector problems are resolved accurately
\cite{GallistlPeterseim2015,Peterseim2017}. In this regime, LOD eliminates pollution while retaining
an online space at the natural wavelength scale. Multiresolution LOD further
decouples the discretization scales and provides rigorous stability and a
priori error control, while super-localized LOD improves the decay of the
localization error and relaxes the required oversampling
\cite{HauckPeterseim2022,FreeseHauckPeterseim2024}. Parallel patch solvers and
reduced corrector models address the remaining setup cost
\cite{EngwerHenningMalqvistPeterseim2019,AbdulleHenning2015,KeilRave2023}. These results prescribe
resolution and localization parameters a priori. They do not provide
computable indicators that distinguish insufficient coarse resolution,
fine-scale resolution, and localization depth on an adaptive mesh hierarchy.

Standard Helmholtz AFEM indicators control the discretization error of
conforming, stabilized, and hybridizable discontinuous Galerkin finite element
solutions through residual or equilibrated-flux constructions
\cite{DuanWu2023,ChaumontFreletErnVohralik2021,CamargoRojasVega2025}.
They do not control the additional localization error caused by truncated LOD
correctors or distinguish it from coarse- and fine-scale discretization
errors. This paper closes that gap. To the best of our knowledge, it gives the first
wavenumber-robust residual a posteriori error estimate for an LOD
discretization of the Helmholtz equation, within the standard LOD resolution,
oversampling, and discrete-stability regime. The estimate is two-sided
relative to the current fine-grid solution and includes a computable bound for
the corrector-localization error. It therefore supports separate updates of
the online coarse space, the fine mesh used for correctors and residuals, and
the oversampling level. %Its geometric constants are independent of fine-mesh refinement depth, while the required stability dependence remains explicit.

The load-family component grows directly out of this indicator. Such families
arise in inverse scattering with many incident fields, source localization
with many candidate sources, seismic or ultrasound imaging with multiple
transmitters, and uncertainty quantification or design with repeated forward
solves. Repeating the adaptive construction for every right-hand side is
costly, but a space adapted to one nominal load may miss features activated by
other loads. We therefore
reuse the local residual Riesz representatives of the training family as
regional additive Schwarz corrections and combine them into a shared
enrichment. Proper orthogonal decomposition and reduced-basis ideas enter only
to compress this residual-generated space, not to replace LOD by a separate
global surrogate. This connects problem-adapted enrichment
\cite{MelenkBabuska1996,BabuskaMelenk1997}, reduced spaces
\cite{KunischVolkwein2002,PrudhommeEtAl2002,RozzaHuynhPatera2008,
BinevEtAl2011}, and localized adaptive reduction
\cite{OhlbergerSchindler2015,BuhrEngwerOhlbergerRave2017}.

The analysis establishes localized residual equivalence, computable
corrector-localization control, and a fine-grid error bound that also covers
coupled enriched solutions. Kernel-lifted tests constructed from the existing
LOD basis yield enrichment-independent stability and quasi-optimality under
a unified oversampling condition. Energy-weighted POD and memberwise checks
control compression. Experiments with a smooth packet and a reentrant
corner measure online compression and reuse over finite load families.

The rest of the paper is organized as follows.
Section~\ref{sec:model-spaces} introduces the Helmholtz problem and the LOD
construction.
Section~\ref{sec:certificates} develops residual and corrector-localization
estimates and the resulting fine-grid error bound.
Section~\ref{sec:family-enrichment} presents the shared regional enrichment
and the adaptive algorithm.
Section~\ref{sec:numerics} reports the numerical experiments.
Section~\ref{sec:conclusions} summarizes the conclusions and outlook.

\section{Problem setting and LOD spaces}\label{sec:model-spaces}

\subsection{Helmholtz problem and mesh hierarchy}\label{sec:model-problem}
Let $\Omega\subset\mathbb R^2$ be a bounded polygonal Lipschitz domain.
Its boundary is partitioned into relatively open parts $\Gamma_D$,
$\Gamma_N$, and $\Gamma_R$, disjoint up to endpoints, with
$|\Gamma_R|>0$. For a wavenumber $k\geq k_{\min}>0$, we consider the Helmholtz problem
\begin{equation*}
\left\{\begin{aligned}
 -\Delta u-k^2u&=f &&\text{in }\Omega,\\
 u&=0 &&\text{on }\Gamma_D,\\
 \partial_\nu u&=0 &&\text{on }\Gamma_N,\\
 \partial_\nu u-\ii k u&=0 &&\text{on }\Gamma_R.
\end{aligned}\right.
\end{equation*}
where $\nu$ denotes the outward unit normal. We work with complex-valued
function spaces and use $L^2$ inner products that are linear in the first
argument and conjugate linear in the second. On the space
$V=\{v\in H^1(\Omega;\mathbb C):v|_{\Gamma_D}=0\}$, we define the
energy inner product, its norm, and the Helmholtz form by
\begin{align}
 b_k(v,w)&=(\nabla v,\nabla w)_\Omega+k^2(v,w)_\Omega,
 &\norm v_k^2&=b_k(v,v),\label{eq:energy}\\
 a(v,w)&=(\nabla v,\nabla w)_\Omega-k^2(v,w)_\Omega
          -\ii k(v,w)_{\Gamma_R}.\label{eq:form}
\end{align}
For $D\subseteq\Omega$, $\norm v_{k,D}$ is the corresponding local energy
norm. The continuity bound
$|a(v,w)|\leq C_a\norm v_k\norm w_k$ holds with $C_a$ depending on
$\Omega$ and $k_{\min}$, but not on the meshes or on $k\geq k_{\min}$.
For a subspace $X\subseteq V$, let $X^\star$ be its continuous anti-dual space of $X$,
with norm
\[
 \norm G_{X^\star}=\sup_{0\neq w\in X}\frac{|G(w)|}{\norm w_k}.
\]
Given $F\in V^\star$, we consider the weak problem: find $u\in V$ such that 
\[a(u,v)=F(v),\qquad v\in V,\] 
where $F(v)=(f,v)_\Omega$.
We are interested in repeated solves with the same form $a$ and varying
loads $F_\mu$, indexed by a parameter set $\mathcal P$. To construct a
space for this family, we base adaptation on a finite training set
$\mathcal P_{\rm tr}=\{\mu_1,\ldots,\mu_M\}$.

At adaptive state $n$, we use conforming triangular meshes
$\mathcal T_{H,n}\preceq\mathcal T_{h,n}$, where $\preceq$ denotes
refinement. The associated continuous piecewise affine spaces satisfy
$V_{H,n}\subseteq V_{h,n}\subset V$. We assume that both mesh sequences
are nested and uniformly shape regular, and that they resolve the
boundary partition. The two spaces have distinct roles. The coarse
space determines the online dimension, whereas the fine space resolves
the corrector and residual problems. When a mesh pair is fixed, we omit
$n$ and write $H_T=\operatorname{diam}T$ and $H|_T=H_T$. We similarly
use $h_K$ and $h_e$ for element and edge diameters on a fine or auxiliary
mesh. Throughout the paper, $\lesssim$ suppresses constants with the
stated dependencies.

On the current fine mesh, we consider Galerkin discretization: find $u_h\in V_h$ such that 
\[a(u_h,v_h)=F(v_h),\qquad v_h\in V_h.\] 
We measure the stability of
this problem by the inf--sup constant
\begin{equation}\label{eq:gamma-fine}
 \gamma_h(k)=\inf_{0\neq v_h\in V_h}\sup_{0\neq w_h\in V_h}
 \frac{|a(v_h,w_h)|}{\norm{v_h}_k\norm{w_h}_k}.
\end{equation}
We assume that the current fine-grid Galerkin problem is uniquely
solvable, i.e.,
\[
  \gamma_h(k)>0.
\]
For fixed $k$, this condition holds once the fine space is sufficiently
rich under the standard approximation hypotheses for compactly perturbed
elliptic problems \cite{BespalovHaberlPraetorius2017}.

\subsection{Quasi-interpolation and the fine-scale kernel}
\label{sec:interpolation}
We first introduce the coarse patches used for interpolation and
localization. For a coarse entity or a union of entities $S$, define
\[
 N^0(S)=S,\qquad
 N(S)=\bigcup\{T\in\mathcal T_H:\overline T\cap\overline S\neq\varnothing\},
 \qquad N^m(S)=N(N^{m-1}(S)).
\]
We denote the maximum patch cardinality by
$C_{\rm patch}(m)=\max_{T\in\mathcal T_H}\#\{K\in\mathcal T_H:
K\subseteq N^m(T)\}$.
Since mesh adjacency is symmetric, these patches satisfy
\begin{equation}\label{eq:patch-overlap}
 \sum_{T\in\mathcal T_H}\norm v_{L^2(N^m(T))}^2
 \leq C_{\rm patch}(m)\norm v_{L^2(\Omega)}^2.
\end{equation}
Shape regularity bounds $C_{\rm patch}(m)$ for each fixed $m$, but does not
imply polynomial growth in $m$ on arbitrarily graded meshes. However, we emphasize that no such
growth assumption is needed for the a posteriori estimates below.

Let $\mathcal N_H$ be all coarse vertices and
$\mathcal N_{H,D}=\mathcal N_H\cap\overline{\Gamma_D}$.
We use the projection $I_H=E_H\Pi_H^{\rm dg}:V\to V_H$, where
$\Pi_H^{\rm dg}$ is the elementwise $L^2$ projection onto discontinuous
piecewise affine functions and
\[
 (E_Hq)(z)=
 \begin{cases}
 0,&z\in\mathcal N_{H,D},\\
 \displaystyle\frac{\sum_{T\ni z}q|_T(z)|T|}{\sum_{T\ni z}|T|},
 &z\notin\mathcal N_{H,D}.
 \end{cases}
\]
This construction is a projection, so $I_Hv_H=v_H$, and it enlarges
supports by at most one coarse layer. We will use its standard local
approximation and stability estimates,
\begin{align}
 H_T^{-1}\norm{v-I_Hv}_{L^2(T)}
 &\leq C_{\rm app}\norm{\nabla v}_{L^2(N(T))},\label{eq:IH-approx}\\
 \norm{\nabla I_Hv}_{L^2(T)}
 &\leq C_I^{\rm st}\norm{\nabla v}_{L^2(N(T))}.\notag
\end{align}
The constants depend only on shape regularity and the fixed boundary
partition. These estimates follow by combining polynomial scaling and
local averaging with patchwise Poincar\'e--Friedrichs inequalities
\cite[Chapter~3]{MalqvistPeterseim2020}; see also \cite{ErnGuermond2017}.

\begin{remark}[Choice of quasi-interpolation]\label{rem:coqi}
The choice of $I_H$ is not unique. Scott--Zhang-type operators and
coefficient-weighted or geometry-adapted operators designed for high-contrast
problems provide alternative choices; see, e.g.,
\cite{ScottZhang1990,EngwerHenningMalqvistPeterseim2019,
PeterseimScheichl2016,HellmanMalqvist2017}.
Different choices can substantially affect approximation and localization
errors and, through their approximation properties, the resulting convergence
behavior.
\end{remark}

The kernel of this projection provides the fine-scale correction space.
We set $W=\ker I_H$, $W_h=W\cap V_h$, and $\Pi^{\rm f}=I-I_H$,
where $I$ is the identity. Since $I_H$ is a projection, we have the direct
decompositions $V=V_H\oplus W$ and $V_h=V_H\oplus W_h$.
To state the resolution condition, we introduce
\[
 \chi_{\rm res}=C_{\rm app}C_{\rm patch}(1)^{1/2}
                 \norm{kH}_{L^\infty(\Omega)}.
\]
\begin{lemma}[Kernel coercivity]\label{lem:kernel-coercivity}
If $\chi_{\rm res}\leq\chi_0<1$, then
\begin{equation*}
 \operatorname{Re}a(w,w)\geq c_W\norm w_k^2,\qquad w\in W,
\end{equation*}
where $c_W=(1-\chi_{\rm res}^2)/(1+\chi_{\rm res}^2)>0$.
\end{lemma}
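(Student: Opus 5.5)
Since $\operatorname{Re}(-\ii k\norm{w}_{L^2(\Gamma_R)}^2)=0$, we have $\operatorname{Re}a(w,w)=\norm{\nabla w}^2_{L^2(\Omega)}-k^2\norm{w}^2_{L^2(\Omega)}$. The plan is therefore to prove an $L^2$ bound $k\norm{w}_{L^2(\Omega)}\le\chi_{\rm res}\norm{\nabla w}_{L^2(\Omega)}$ for $w\in W$ and then convert it into the energy norm. For $w\in W$ we have $I_Hw=0$, so $w=w-I_Hw$. Applying \eqref{eq:IH-approx} elementwise gives
\[
 k^2\norm w_{L^2(T)}^2\le C_{\rm app}^2 (kH_T)^2\norm{\nabla w}_{L^2(N(T))}^2
 \le C_{\rm app}^2\norm{kH}_{L^\infty(\Omega)}^2\norm{\nabla w}_{L^2(N(T))}^2 .
\]
Summing over $T\in\mathcal T_H$ and using the overlap bound \eqref{eq:patch-overlap} with $m=1$, applied to $\nabla w$ (componentwise, which is legitimate because the estimate is a counting argument on squared local norms), yields $k^2\norm w_{L^2(\Omega)}^2\le\chi_{\rm res}^2\norm{\nabla w}_{L^2(\Omega)}^2$.

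It remains to compare with $\norm w_k^2=\norm{\nabla w}^2+k^2\norm w^2$. Write $g=\norm{\nabla w}^2$ and $z=k^2\norm w^2\le\chi_{\rm res}^2 g$. Then $\operatorname{Re}a(w,w)=g-z$ and $\norm w_k^2=g+z$. The ratio $(g-z)/(g+z)$ is decreasing in $z$, so its minimum over the admissible range is attained at $z=\chi_{\rm res}^2 g$, giving $(1-\chi_{\rm res}^2)/(1+\chi_{\rm res}^2)=c_W$. Equivalently, $g-z-c_W(g+z)=(1-c_W)g-(1+c_W)z\ge0$, because $(1-c_W)/(1+c_W)=\chi_{\rm res}^2$. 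Positivity of $c_W$ follows from $\chi_{\rm res}\le\chi_0<1$. The case $g=0$ is trivial, since then $z=0$ as well.

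The only delicate point is the first step: the overlap estimate must be applied to $\nabla w$ on the enlarged patches $N(T)$ rather than to $w$ itself. This is exactly why the constant $C_{\rm patch}(1)^{1/2}$ appears in $\chi_{\rm res}$. Otherwise the argument is routine. No boundary terms intervene, and the Robin term drops out on taking real parts.
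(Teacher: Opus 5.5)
Your proof is correct and follows essentially the same route as the paper: $I_Hw=0$, combined with \eqref{eq:IH-approx} and the overlap bound \eqref{eq:patch-overlap} for $m=1$, gives $k\norm w_{L^2(\Omega)}\leq\chi_{\rm res}\norm{\nabla w}_{L^2(\Omega)}$, and the purely imaginary Robin term drops out of $\operatorname{Re}a(w,w)$. You also spell out the final algebra, namely $(1-c_W)/(1+c_W)=\chi_{\rm res}^2$, which the paper leaves implicit.
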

\begin{proof}
Since $I_Hw=0$, \eqref{eq:IH-approx} and \eqref{eq:patch-overlap} give
$k\norm w_{L^2(\Omega)}\leq\chi_{\rm res}\norm{\nabla w}_{L^2(\Omega)}$.
The Robin term has zero real part, so
$\operatorname{Re}a(w,w)=\norm{\nabla w}_{L^2(\Omega)}^2-
 k^2\norm w_{L^2(\Omega)}^2$, which proves the claim.
\end{proof}
We assume this resolution condition throughout the analysis below.
In addition to kernel coercivity, the local interpolation estimates give
the energy stability bounds
\begin{equation}\label{eq:projection-stability}
 \norm{I_Hv}_k\leq C_I\norm v_k,\qquad
 \norm{\Pi^{\rm f}v}_k\leq(1+C_I)\norm v_k,
\end{equation}
where $C_I$ depends only on $\chi_0$ and fixed-order patch geometry.

\subsection{Correctors and the LOD problem}\label{sec:correctors}
With coercivity on the kernel available, we can define the LOD correctors.
The ideal primal and adjoint correctors $\Qir,\Qirs:V_h\to W_h$ are
determined by
\[
a(\Qir v_h,w)=a(v_h,w),\qquad
 a(w,\Qirs v_h)=a(w,v_h),\qquad w\in W_h.
\]
To localize these problems, we first split the form into element
contributions. For a union $D$ of coarse elements, let $a_D$ denote
\eqref{eq:form} restricted to $D$, with the Robin integral taken over
$\Gamma_R\cap\overline D$. For $T\in\mathcal T_H$ and
$\ell\in\mathbb N_0\cup\{\infty\}$, we set $N^\infty(T)=\Omega$ and
use the local kernel space
\[
 W_h(N^\ell(T))=\{w\in W_h:\supp w\subseteq\overline{N^\ell(T)}\}.
\]
The element correctors $q_{T,\ell}v_H\in W_h(N^\ell(T))$ and $q_{T,\ell}^*v_H\in W_h(N^\ell(T))$ satisfy
\[
 a(q_{T,\ell}v_H,w)=a_T(v_H,w),\qquad
 a(w,q_{T,\ell}^*v_H)=a_T(w,v_H)
 \quad(w\in W_h(N^\ell(T))).
\]
Kernel coercivity ensures that both local problems are well posed.
Summing their solutions defines the operators
$\mathcal Q_{\ell}=\sum_Tq_{T,\ell}$ and
$\mathcal Q_{\ell}^*=\sum_Tq_{T,\ell}^*$ on $V_H$. At
$\ell=\infty$, these sums agree with the ideal correctors.
We now define
the transfer operators and the associated trial and test spaces by
\begin{align*}
 T_{\ell}&=I-\mathcal Q_{\ell}, \qquad
 X_{H,\ell}=T_{\ell}V_H,\\
 T_{\ell}^*&=I-\mathcal Q_{\ell}^*,\qquad 
 Y_{H,\ell}=T_{\ell}^*V_H.
\end{align*}
We seek the base LOD
approximation in the form $U_{H,\ell}=T_{\ell}u_{H,\ell}$, where
$u_{H,\ell}\in V_H$ solves the Petrov--Galerkin problem
\begin{equation}\label{eq:PG}
 a(T_{\ell}u_{H,\ell},T_{\ell}^*v_H)
 =F(T_{\ell}^*v_H)\qquad(v_H\in V_H).
\end{equation}
For the ideal correctors, the defining equations give the orthogonality
relations $a(T_{\infty}v_h,w)=a(w,T_{\infty}^*v_h)=0$ for
$w\in W_h$. Kernel coercivity and continuity also give
\begin{equation}\label{eq:ideal-transfer-bound}
 \norm{T_{\infty}v_h}_k,\ \norm{T_{\infty}^*v_h}_k
 \leq C_F\norm{v_h}_k,\qquad C_F=1+C_a/c_W.
\end{equation}

\begin{lemma}[Ideal LOD stability]\label{prop:ideal-stability}
The ideal pair has inf--sup constant
\begin{equation}\label{eq:ideal-stability}
 \alpha_{H,\infty}:=
 \inf_{0\neq v\in X_{H,\infty}}\sup_{0\neq y\in Y_{H,\infty}}
 \frac{|a(v,y)|}{\norm v_k\norm y_k}
 \geq\gamma_h(k)/C_F.
\end{equation}
\end{lemma}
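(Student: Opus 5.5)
Fix $0\neq v\in X_{H,\infty}$. We will build a test function in $Y_{H,\infty}$ from a fine-grid inf--sup partner, and use the ideal orthogonality relations to move between $V_h$ and $Y_{H,\infty}$. By \eqref{eq:gamma-fine}, since $v\in V_h$, there is $0\neq w_h\in V_h$ with $|a(v,w_h)|\geq\gamma_h(k)\norm v_k\norm{w_h}_k$ (the supremum over the finite-dimensional space is attained). Decompose $w_h=I_Hw_h+(w_h-I_Hw_h)$ with $w_h-I_Hw_h\in W_h$. Since $v=T_\infty v_H$ with $v_H\in V_H$, the orthogonality $a(T_\infty v_H,w)=0$ for $w\in W_h$ gives $a(v,w_h)=a(v,I_Hw_h)$. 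The same orthogonality gives $a(v,\Qirs I_Hw_h)=0$, because $\Qirs I_Hw_h\in W_h$. Hence, with $y:=T_\infty^*I_Hw_h\in Y_{H,\infty}$,
\[
 a(v,y)=a(v,I_Hw_h)=a(v,w_h).
\]

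The key step is bounding $\norm y_k$ by $C_F\norm{w_h}_k$ rather than by $C_FC_I\norm{w_h}_k$. A direct use of \eqref{eq:ideal-transfer-bound} together with \eqref{eq:projection-stability} would introduce the spurious factor $C_I$. To avoid it, note that $T_\infty^*$ depends only on the coarse part. Indeed, $\Qirs$ acts as the identity on $W_h$, since for $z\in W_h$ the defining equation $a(w,\Qirs z)=a(w,z)$ for all $w\in W_h$, together with kernel coercivity, forces $\Qirs z=z$. Therefore $T_\infty^* z=0$ for $z\in W_h$, and so $T_\infty^*I_Hw_h=T_\infty^*w_h$. Now \eqref{eq:ideal-transfer-bound} applied to $w_h\in V_h$ gives $\norm y_k\leq C_F\norm{w_h}_k$.

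Combining the two steps,
\[
 \frac{|a(v,y)|}{\norm v_k\norm y_k}\geq\frac{\gamma_h(k)\norm v_k\norm{w_h}_k}{\norm v_k\,C_F\norm{w_h}_k}=\gamma_h(k)/C_F .
\]
In particular $a(v,y)\neq0$, so $y\neq0$ and the quotient is admissible. Taking the infimum over $v$ yields \eqref{eq:ideal-stability}.

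The main obstacle I expect is verifying \eqref{eq:ideal-transfer-bound} in the form used here, i.e.\ for $T_\infty^*$ applied to general $v_h\in V_h$ and not only to coarse functions. I would check this directly. Set $z=\Qirs v_h$. Kernel coercivity gives $c_W\norm z_k^2\leq|a(z,z)|=|a(z,v_h)|\leq C_a\norm z_k\norm{v_h}_k$, so $\norm z_k\leq (C_a/c_W)\norm{v_h}_k$. The triangle inequality then gives $\norm{T_\infty^*v_h}_k\leq(1+C_a/c_W)\norm{v_h}_k=C_F\norm{v_h}_k$, as required.
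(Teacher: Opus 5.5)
Your proof is correct and follows essentially the same route as the paper: ideal orthogonality gives $a(v,w_h)=a(v,T_\infty^*w_h)$, the fact that $T_\infty^*$ vanishes on $W_h$ places $T_\infty^*w_h=T_\infty^*I_Hw_h$ in $Y_{H,\infty}$, and \eqref{eq:ideal-transfer-bound} bounds its norm by $C_F\norm{w_h}_k$ without a factor $C_I$. The only differences are cosmetic: you pass through $I_Hw_h$ first, you use an attained maximizer instead of taking the supremum over $y_h\in V_h$, and you check \eqref{eq:ideal-transfer-bound} explicitly.
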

\begin{proof}
Let $v\in X_{H,\infty}$ and $y_h\in V_h$. By ideal orthogonality,
$a(v,y_h)=a(v,T_{\infty}^*y_h)$. Moreover, the corrected test belongs
to $Y_{H,\infty}$ because $T_{\infty}^*$ vanishes on $W_h$.
We can therefore take the supremum over $y_h$ and apply
\eqref{eq:ideal-transfer-bound} together with \eqref{eq:gamma-fine}
to obtain the stated lower bound.
\end{proof}

\section{A posteriori error control}\label{sec:certificates}
We now develop the error estimates that guide adaptation. We first
localize residuals on the fine-scale kernel, then apply the same
construction to the corrector-localization defect. LOD orthogonality
allows us to combine the two estimates into a bound for the error
relative to the fine-grid solution.

\subsection{Local residuals on the kernel}\label{sec:fine-grid-residual}
For each $z\in\mathcal N_H$, we introduce the vertex-patch space
$W_{h,z}=\{w\in W_h:\supp w\subseteq\overline{N^2(z)}\}$.
Throughout this subsection, all vertex-indexed sums run over
$z\in\mathcal N_H$, including the Dirichlet vertices.
The following decomposition allows us to work with these local spaces
without introducing constants that depend on the number of fine
elements in a coarse patch.

\begin{lemma}[Stable local decomposition]\label{thm:stable-decomposition}
Every $w\in W_h$ has a decomposition $w=\sum_z w_z$, $w_z\in W_{h,z}$,
such that
\begin{equation}\label{eq:stable-decomposition}
 \sum_z\norm{w_z}_k^2\leq C_{\rm sd}^2\norm w_k^2.
\end{equation}
Conversely, arbitrary $w_z\in W_{h,z}$ satisfy
\begin{equation}\label{eq:stable-synthesis}
 \norm{\sum_z w_z}_k^2\leq C_{\rm syn}^2\sum_z\norm{w_z}_k^2.
\end{equation}
The constants depend only on $\chi_0$, coarse- and fine-mesh shape regularity,
and fixed-order patch overlap.
\end{lemma}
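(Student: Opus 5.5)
The standard approach is to localize with a partition of unity and then project back into the kernel. Let $\{\lambda_z\}_{z\in\mathcal N_H}$ be the coarse hat functions, defined for every vertex including the Dirichlet ones, so that $\sum_z\lambda_z\equiv1$. Given $w\in W_h$, the product $\lambda_z w$ is not piecewise affine on $\mathcal T_h$. We therefore first apply the fine nodal interpolant $\mathcal I_h$ and set $\tilde w_z=\mathcal I_h(\lambda_z w)\in V_h$. Since $\mathcal T_H\preceq\mathcal T_h$ and $w\in V_h$, we have $\sum_z\tilde w_z=\mathcal I_h w=w$, and $\supp\tilde w_z\subseteq\overline{N(z)}$.

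The functions $\tilde w_z$ need not lie in $W_h$. To fix this we define
\[
 w_z=\Pi^{\rm f}\tilde w_z=\tilde w_z-I_H\tilde w_z .
\]
Then $w_z\in W_h$, and because $I_H$ enlarges supports by at most one coarse layer, $\supp w_z\subseteq\overline{N^2(z)}$, so $w_z\in W_{h,z}$. Moreover,
\[
 \sum_z w_z=\Pi^{\rm f}w=w-I_Hw=w,
\]
using $I_Hw=0$ and the linearity of $I_H$.

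For the stability bound \eqref{eq:stable-decomposition}, the key local estimate is
\[
 \norm{\tilde w_z}_k\lesssim\norm{w}_{k,N(z)}+\norm{\nabla\lambda_z\, w}_{L^2(N(z))}
 \lesssim\norm w_{k,N(z)}+H^{-1}\norm w_{L^2(N(z))}.
\]
The first inequality requires stability of $\mathcal I_h$ applied to products of $\lambda_z$ with fine piecewise affine functions, with constants independent of $H/h$. This holds elementwise: on each fine element $K$, $\lambda_z$ is affine, and an inverse-estimate-free bound for $\mathcal I_h(\lambda_z w)$ in terms of $\nabla(\lambda_z w)$ and $\lambda_z\nabla w$ follows by standard arguments, since $\lambda_z$ varies on scale $H\ge h$. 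I expect this to be the main technical obstacle: the constants must depend only on shape regularity, and in particular not on the number of fine elements per coarse element.

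The $H^{-1}L^2$ term is controlled by the kernel property $w\in W$. Combining \eqref{eq:IH-approx} with $I_Hw=0$ gives $H_T^{-1}\norm w_{L^2(T)}\le C_{\rm app}\norm{\nabla w}_{L^2(N(T))}$, so
\[
 \norm{\tilde w_z}_k\lesssim\norm{\nabla w}_{L^2(N^2(z))}+k\norm w_{L^2(N(z))}.
\]
The correction step is then handled by \eqref{eq:projection-stability} in its local form, which gives $\norm{w_z}_k\lesssim\norm{\tilde w_z}_{k,N^2(z)}$. Summing over $z$ and using the finite overlap \eqref{eq:patch-overlap} with fixed $m\le3$ yields \eqref{eq:stable-decomposition}. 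The constant depends on $\chi_0$ only through the projection-stability constant $C_I$.

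The synthesis estimate \eqref{eq:stable-synthesis} is elementary. On each coarse element $T$, only those $w_z$ with $T\subseteq N^2(z)$ are nonzero, and their number is bounded by a fixed-order patch cardinality $C$. By Cauchy--Schwarz,
\[
 \norm{\textstyle\sum_z w_z}_{k,T}^2\le C\sum_z\norm{w_z}_{k,T}^2 .
\]
Summing over $T$ gives \eqref{eq:stable-synthesis} with $C_{\rm syn}^2=C$.
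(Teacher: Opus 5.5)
Your proposal is correct and follows the paper's proof essentially step for step. The paper uses the same choice $w_z=\Pi^{\rm f}\mathcal I_h(\lambda_z w)$ with all coarse hat functions, absorbs the $H^{-1}\norm{w}_{L^2}$ term through $I_Hw=0$ and \eqref{eq:IH-approx}, applies projection stability, and proves the synthesis bound by elementwise Cauchy--Schwarz with bounded overlap. The step you flag as the main obstacle is settled in the paper in one line: since $\lambda_z w$ is quadratic on each fine triangle, polynomial scaling gives $\norm{\nabla(\lambda_z w-\mathcal I_h(\lambda_z w))}_{L^2(K)}\lesssim h_K\norm{\nabla\lambda_z}_{L^\infty(K)}\norm{\nabla w}_{L^2(K)}\lesssim\norm{\nabla w}_{L^2(K)}$, with constants depending only on shape regularity.
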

\begin{proof}
Let $\lambda_z$ denote the coarse nodal hat functions, including those
at Dirichlet vertices, and let $\mathcal I_h$ be fine-grid nodal
interpolation on continuous functions. We choose
$w_z=\Pi^{\rm f}\mathcal I_h(\lambda_z w)$.
The locality of $I_H$ ensures that $w_z\in W_{h,z}$, while the partition
of unity gives $\sum_z w_z=\Pi^{\rm f}w=w$. To bound the energy of these
components, we note that $\lambda_z w$ is quadratic on each fine
triangle. Polynomial scaling and the product rule therefore yield
\[
 \sum_z\norm{\mathcal I_h(\lambda_z w)}_k^2
 \lesssim\norm{\nabla w}_{L^2(\Omega)}^2
          +\norm{H^{-1}w}_{L^2(\Omega)}^2+k^2\norm w_{L^2(\Omega)}^2
 \lesssim\norm w_k^2.
\]
In the last step, we used $I_Hw=0$ and \eqref{eq:IH-approx}.
Applying projection stability now proves \eqref{eq:stable-decomposition}.
For the converse estimate, we apply pointwise Cauchy--Schwarz and use
the bounded overlap of the patches to obtain
\eqref{eq:stable-synthesis}.
\end{proof}

We next associate a local Riesz representative with each patch. For
$G\in V_h^\star$, define $\xi_z(G)\in W_{h,z}$ and the combined
indicator $\eta(G)$ by
\begin{equation}\label{eq:as-local-riesz}
      \begin{aligned}
             b_k(\xi_z(G),w)&=G(w),\qquad w\in W_{h,z},\\
 \eta(G)^2&=\sum_z\norm{\xi_z(G)}_k^2.
      \end{aligned}
\end{equation}
When $G$ is the residual of an approximation $U\in V_h$ for a load
$F_\mu$, we use the notation
\begin{align*}
 R_{\mu,U}&=F_\mu-a(U,\cdot),\\
 \eta_{H,z,\mu}(U)&=\norm{\xi_z(R_{\mu,U})}_k,
 \qquad\eta_{H,\mu}(U)=\eta(R_{\mu,U}).
\end{align*}
We omit $\mu$ when discussing a single load and suppress $(U)$ when the
current approximation is clear. The stable decomposition shows that
these local indicators provide an equivalent norm of the residual
restricted to $W_h$.

\begin{theorem}[Residual localization and efficiency]
\label{thm:residual-localization}
For every $G\in V_h^\star$,
\begin{equation}\label{eq:residual-equivalence}
 C_{\rm syn}^{-1}\eta(G)\leq\norm G_{W_h^\star}
 \leq C_{\rm sd}\eta(G).
\end{equation}
Consequently, every $U\in V_h$ satisfies
\begin{equation}\label{eq:global-efficiency}
 \eta_H(U)\leq C_aC_{\rm syn}\norm{u_h-U}_k.
\end{equation}
Moreover,
$\eta_{H,z}(U)\leq C_{a,z}\norm{u_h-U}_{k,N^2(z)}$, where $C_{a,z}$ is
the local continuity constant.
\end{theorem}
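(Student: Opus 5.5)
The plan is to prove the two-sided equivalence \eqref{eq:residual-equivalence} by the standard additive Schwarz argument, using the local Riesz representatives $\xi_z(G)$ from \eqref{eq:as-local-riesz} together with the two halves of Lemma~\ref{thm:stable-decomposition}. Efficiency then follows by identifying the residual with $a(u_h-U,\cdot)$ and invoking continuity.

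\emph{Upper bound.} Take $w\in W_h$ and split it as $w=\sum_z w_z$ according to Lemma~\ref{thm:stable-decomposition}. Linearity of $G$ and the defining relation of $\xi_z(G)$ on $W_{h,z}$ give
\[
 G(w)=\sum_z G(w_z)=\sum_z b_k(\xi_z(G),w_z).
\]
The Cauchy--Schwarz inequality for $b_k$, and then in $\ell^2$ over $z$, yields
\[
 |G(w)|\le\eta(G)\Bigl(\sum_z\norm{w_z}_k^2\Bigr)^{1/2}\le C_{\rm sd}\,\eta(G)\norm w_k
\]
by \eqref{eq:stable-decomposition}.

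\emph{Lower bound.} Set $\xi=\sum_z\xi_z(G)\in W_h$. Since $\xi_z(G)\in W_{h,z}$, testing its defining relation with itself gives $G(\xi_z(G))=\norm{\xi_z(G)}_k^2$. Summing over $z$ gives $G(\xi)=\eta(G)^2$. By \eqref{eq:stable-synthesis}, $\norm\xi_k\le C_{\rm syn}\eta(G)$. Hence
\[
 \eta(G)^2=|G(\xi)|\le\norm G_{W_h^\star}\,C_{\rm syn}\,\eta(G).
\]
Dividing by $\eta(G)$, which is trivial when $\eta(G)=0$, gives the left inequality.

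\emph{Efficiency.} Galerkin orthogonality of $u_h$ on $V_h$ gives $R_U(w)=F(w)-a(U,w)=a(u_h-U,w)$ for all $w\in V_h$. Continuity of $a$ then yields $\norm{R_U}_{W_h^\star}\le C_a\norm{u_h-U}_k$. Combining this with the lower bound of \eqref{eq:residual-equivalence} proves \eqref{eq:global-efficiency}.

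\emph{Local bound.} Test the definition of $\xi_z$ with $w=\xi_z(R_U)$:
\[
 \norm{\xi_z}_k^2=a(u_h-U,\xi_z).
\]
Since $\supp\xi_z\subseteq\overline{N^2(z)}$, this equals $a_{N^2(z)}(u_h-U,\xi_z)$. The Robin term is included only on $\Gamma_R\cap\overline{N^2(z)}$, and $\xi_z$ vanishes outside the patch. Local continuity of $a_{N^2(z)}$ with constant $C_{a,z}$ bounds this by $C_{a,z}\norm{u_h-U}_{k,N^2(z)}\norm{\xi_z}_{k}$. Because $\xi_z$ is supported in the patch, its global and local energy norms coincide, and dividing gives the claim.

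\emph{Main obstacle.} The argument is mostly bookkeeping. The only delicate point is that the sums run over all vertices, Dirichlet ones included, so Lemma~\ref{thm:stable-decomposition} must be applied exactly with that index set. Beyond this, one must check that the local Robin boundary term is controlled by the local energy norm. For the latter I would try a local trace inequality $k\norm v_{L^2(\Gamma_R\cap\partial D)}^2\lesssim\norm v_{k,D}^2$ with $k\ge k_{\min}$. The resulting constant depends on the patch geometry, and it is absorbed into $C_{a,z}$, which is why the statement only names it as a local continuity constant.
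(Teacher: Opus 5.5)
Your proposal is correct and follows the paper's proof essentially step for step. You get the upper bound from the stable decomposition and Cauchy--Schwarz, and the lower bound by testing $G$ with $\sum_z\xi_z(G)$ together with \eqref{eq:stable-synthesis}; efficiency comes from $R_U=a(u_h-U,\cdot)$ and continuity, and the patchwise bound from restricting $a$ to $N^2(z)$ with the Robin trace factor absorbed into $C_{a,z}$.

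One minor precision: on boundary patches that local trace factor scales like $1+(k\operatorname{diam}N^2(z))^{-1}$, so $k\geq k_{\min}$ alone does not make it uniform, but this is harmless because the statement only names $C_{a,z}$.
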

\begin{proof}
We first use the decomposition in \eqref{eq:stable-decomposition} to
obtain 
\[|G(w)|\leq\eta(G)(\sum_z\norm{w_z}_k^2)^{1/2}
\leq C_{\rm sd}\eta(G)\norm w_k.\] 
Taking the supremum over $w\in W_h$
gives the upper bound in \eqref{eq:residual-equivalence}.
For the reverse inequality, \eqref{eq:as-local-riesz} and
\eqref{eq:stable-synthesis} give
\[
 \eta(G)^2=G\Bigl(\sum_z\xi_z(G)\Bigr)
 \leq C_{\rm syn}\norm G_{W_h^\star}\eta(G).
\]
This proves the reverse bound. Finally, we use
$R_U=a(u_h-U,\cdot)$ and continuity of $a$ to obtain global efficiency.
Applying the same argument to the restricted form on $N^2(z)$ gives the
patchwise estimate. Here $C_{a,z}\leq1$ for patches disjoint from
$\Gamma_R$, while boundary patches additionally involve a local trace
factor.
\end{proof}
\begin{remark}[Relation to preconditioning-based indicators]
Let $\mathscr M_hG=\sum_z\xi_z(G)$. Then
$\eta(G)^2=G(\mathscr M_hG)$, so $\eta(G)$ is the residual norm induced by
the additive Schwarz approximation $\mathscr M_h$ of the inverse Riesz map.
This is the preconditioning-based principle for a posteriori estimation
developed in \cite{LiZikatanov2021,LiZikatanov2025}. The nodal kernel spaces
and \cref{thm:stable-decomposition} provide its present subspace-correction
realization. It is also related to the smoother-type indicators of
\cite{LiShui2026}, which replace local inverse actions by inexpensive
smoothing steps on an enriched discrete space. Here the global Helmholtz
form is indefinite: the Schwarz construction controls the residual first on
the coercive fine-scale kernel $W_h$, while LOD Galerkin orthogonality,
localization control, and fine-grid inf--sup stability subsequently convert
that restricted estimate into the global bound of
\cref{thm:reliability}.
\end{remark}

\subsection{Corrector-localization error}\label{sec:corrector-certificate}
The same residual construction also measures the error caused by
truncating the corrector problems. On $V_H$, we define the primal and
adjoint defects by $\mathcal D_\ell=\Qir-\Qlr$ and
$\mathcal D_\ell^*=\Qirs-\Qlrs$.
Let $\mathcal Cv=\overline v$. Then 
\begin{equation*}
 \mathcal Q_{\ell}=\mathcal C\mathcal Q_{\ell}^*\mathcal C,
 \qquad
 \delta_\ell:=\norm{\mathcal D_\ell}_{\mathcal L(V_H,V_h)}
              =\norm{\mathcal D_\ell^*}_{\mathcal L(V_H,V_h)},
\end{equation*}
where the operator norm is associated with the energy norm in \eqref{eq:energy}.
Although $\delta_\ell$
involves the ideal correctors, we can estimate it from localized
quantities alone. To do so, we introduce
\begin{equation*}
 G_\ell(v_H)(w)=\overline{a(w,T_{\ell}^*v_H)},\qquad
 \Theta_\ell=\sup_{0\neq v_H\in V_H}
              \frac{\eta(G_\ell(v_H))}{\norm{v_H}_k}.
\end{equation*}
The quantity $\Theta_\ell$ is a finite-dimensional operator norm
determined by the localized tests and the local Riesz problems.
Equivalently, its square is the largest generalized eigenvalue of the
associated residual and energy forms on $V_H$. The following result
relates this computable quantity to the corrector defect.

\begin{theorem}[Corrector estimate]\label{thm:corrector-certificate}
The localization defect satisfies
\begin{equation}\label{eq:corrector-certificate}
 \frac{\Theta_\ell}{C_aC_{\rm syn}}\leq\delta_\ell
 \leq\frac{C_{\rm sd}}{c_W}\Theta_\ell.
\end{equation}
\end{theorem}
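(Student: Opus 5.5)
The plan is to reduce both inequalities to \cref{thm:residual-localization}, applied to the functional $G_\ell(v_H)\in V_h^\star$, and to link the dual norm of this functional on $W_h$ to $\mathcal D_\ell^*v_H$. The identity $\delta_\ell=\norm{\mathcal D_\ell^*}$ lets us work with the adjoint defect throughout.

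\emph{Step 1 (key identity).} Fix $v_H\in V_H$ and $w\in W_h$. Ideal adjoint orthogonality gives $a(w,T_\infty^*v_H)=0$. Since $T_\ell^*v_H-T_\infty^*v_H=\Qirs v_H-\Qlrs v_H=\mathcal D_\ell^*v_H$, we obtain
\[
 a(w,T_\ell^*v_H)=a(w,\mathcal D_\ell^*v_H),\qquad w\in W_h .
\]
Hence $G_\ell(v_H)(w)=\overline{a(w,\mathcal D_\ell^*v_H)}$, which is conjugate linear in $w$ as required. Moreover $\mathcal D_\ell^*v_H\in W_h$, because both correctors map into $W_h$.

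\emph{Step 2 (upper bound).} Set $e=\mathcal D_\ell^*v_H\in W_h$ and test with $w=e$. Kernel coercivity (\cref{lem:kernel-coercivity}) gives
\[
 c_W\norm e_k^2\le|a(e,e)|=|G_\ell(v_H)(e)|\le\norm{G_\ell(v_H)}_{W_h^\star}\norm e_k .
\]
The upper bound in \eqref{eq:residual-equivalence} then yields
\[
 \norm e_k\le \frac{C_{\rm sd}}{c_W}\,\eta(G_\ell(v_H))\le\frac{C_{\rm sd}}{c_W}\,\Theta_\ell\norm{v_H}_k .
\]
Taking the supremum over $v_H$ gives $\norm{\mathcal D_\ell^*}\le C_{\rm sd}\Theta_\ell/c_W$.

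\emph{Step 3 (lower bound).} The lower bound in \eqref{eq:residual-equivalence}, combined with continuity of $a$, gives
\[
 \eta(G_\ell(v_H))\le C_{\rm syn}\sup_{w\in W_h}\frac{|a(w,e)|}{\norm w_k}\le C_{\rm syn}C_a\norm e_k\le C_aC_{\rm syn}\,\delta_\ell\norm{v_H}_k .
\]
Dividing by $\norm{v_H}_k$ and taking the supremum proves $\Theta_\ell\le C_aC_{\rm syn}\delta_\ell$.

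\emph{Main obstacle.} The only delicate point is the bookkeeping of complex conjugation. One must check that $G_\ell(v_H)$ really belongs to the anti-dual space, so that \cref{thm:residual-localization} applies. One must also confirm that $|\overline{a(w,\cdot)}|=|a(w,\cdot)|$, so the conjugate does not change any of the dual norms. Finally, the equality $\delta_\ell=\norm{\mathcal D_\ell^*}$ follows from the conjugation relation $\mathcal Q_\ell=\mathcal C\mathcal Q_\ell^*\mathcal C$, which is stated before the theorem; I would use it as given.
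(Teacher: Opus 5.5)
Your proof is correct and follows essentially the same route as the paper. Both derive $G_\ell(v_H)(w)=\overline{a(w,\mathcal D_\ell^*v_H)}$ on $W_h$ from ideal orthogonality, bound $c_W\|\mathcal D_\ell^*v_H\|_k\le\|G_\ell(v_H)\|_{W_h^\star}\le C_a\|\mathcal D_\ell^*v_H\|_k$ by kernel coercivity and continuity, and then apply \eqref{eq:residual-equivalence} and take the supremum over $v_H$.
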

\begin{proof}
For $w\in W_h$, ideal orthogonality gives
$G_\ell(v_H)(w)=\overline{a(w,\mathcal D_\ell^*v_H)}$.
Using kernel coercivity and continuity, we obtain
\[
 c_W\norm{\mathcal D_\ell^*v_H}_k
 \leq\norm{G_\ell(v_H)}_{W_h^\star}
 \leq C_a\norm{\mathcal D_\ell^*v_H}_k.
\]
We then apply \eqref{eq:residual-equivalence} and take the supremum
over $v_H$ to obtain both bounds.
\end{proof}
The computable estimate is complemented by the usual a priori decay
bound. For finite $\ell$, the standard LOD cut-off argument gives
\begin{equation}\label{eq:localization}
 \delta_\ell\leq C_{\rm loc}C_{\rm patch}(\ell+1)^{1/2}\beta^\ell,
 \qquad 0<\beta<1,
\end{equation}
with $C_{\rm loc}$ and $\beta$ independent of fine-mesh refinement depth
\cite{MalqvistPeterseim2014,Peterseim2017}.
For the discrete kernel, we use the kernel-preserving cut-off
$\Pi^{\rm f}\mathcal I_h(\vartheta w)$, where $\vartheta$ is a coarse
piecewise affine cut-off. The product estimate in
\cref{thm:stable-decomposition} controls this function, and kernel
coercivity yields contraction of the corrector tails across fixed-width
annuli. Summing the element contributions introduces the patch factor
in \eqref{eq:localization}. In contrast, the computable bound
\eqref{eq:corrector-certificate} does not require a growth law for this
factor.

\subsection{Two-sided fine-grid error estimate}\label{sec:global-reliability}
We are now ready to combine residual localization with LOD orthogonality.
The resulting estimate requires orthogonality only on the base test
space, a property that will allow us to use it for enriched solutions
as well.

\begin{theorem}[Reliability]\label{thm:reliability}
Let $U\in V_h$ satisfy $R_U(y)=0$ for every $y\in Y_{H,\ell}$.
If $\gamma_h(k)>0$ and kernel coercivity holds, then we have 
\begin{equation}\label{eq:global-reliability}
 \norm{u_h-U}_k\leq C_{\rm sd}
 \left(c_W^{-1}+\frac{C_IC_F}{\gamma_h(k)}\delta_\ell\right)\eta_H(U).
\end{equation}
In particular,
\begin{equation}\label{eq:fine-grid-interval}
 \frac{\eta_H(U)}{C_aC_{\rm syn}}
 \leq\norm{u_h-U}_k
 \leq C_{\rm sd}\left(c_W^{-1}
       +\frac{C_IC_FC_{\rm sd}}{\gamma_h(k)c_W}\Theta_\ell\right)\eta_H(U).
\end{equation}
\end{theorem}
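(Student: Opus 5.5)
Let $e=u_h-U\in V_h$ and split it through the direct decomposition $V_h=V_H\oplus W_h$ as $e=e_H+e_W$ with $e_H=I_He\in V_H$ and $e_W=\Pi^{\rm f}e\in W_h$. The natural decomposition adapted to LOD is instead $e=T_\infty e_H+(e-T_\infty e_H)$. Since $T_\infty e_H=e_H-\Qir e_H$ with $\Qir e_H\in W_h$, the second piece lies in $W_h$. Call it $\tilde e_W:=e-T_\infty e_H\in W_h$. I bound the two pieces separately and add them; it is the triangle inequality, not Pythagoras, that produces the additive form $c_W^{-1}+\cdots$.

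\emph{Kernel part.} For $w\in W_h$ we have $a(T_\infty e_H,w)=0$ by ideal orthogonality, hence $a(\tilde e_W,w)=a(e,w)=R_U(w)$. Taking $w=\tilde e_W$ and applying \cref{lem:kernel-coercivity} gives
\[
c_W\norm{\tilde e_W}_k^2\le|R_U(\tilde e_W)|\le\norm{R_U}_{W_h^\star}\norm{\tilde e_W}_k .
\]
Combining this with the upper bound in \eqref{eq:residual-equivalence} yields $\norm{\tilde e_W}_k\le c_W^{-1}C_{\rm sd}\eta_H(U)$.

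\emph{Coarse part.} Here $T_\infty e_H\in X_{H,\infty}$, so I use a fine-grid inf--sup argument. The pointwise form of the argument in \cref{prop:ideal-stability} is $\gamma_h\norm{v}_k\le C_F\sup_{y_h}|a(v,T^*_\infty y_h)|/\norm{y_h}_k$. Since $T^*_\infty$ annihilates $W_h$, one may replace $y_h$ by $v_H=I_Hy_h$, with $\norm{v_H}_k\le C_I\norm{y_h}_k$, and then $T^*_\infty y_h=T^*_\infty v_H$. Thus it suffices to bound $|a(T_\infty e_H,T^*_\infty v_H)|$ for $v_H\in V_H$. By adjoint ideal orthogonality, $a(\tilde e_W,T^*_\infty v_H)=0$, so
\[
a(T_\infty e_H,T^*_\infty v_H)=a(e,T^*_\infty v_H)=R_U(T^*_\infty v_H)=R_U(T^*_\infty v_H-T^*_\ell v_H)=-R_U(\mathcal D^*_\ell v_H),
\]
where the third equality uses the hypothesis $R_U|_{Y_{H,\ell}}=0$. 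Because $\mathcal D^*_\ell v_H\in W_h$, we obtain $|R_U(\mathcal D^*_\ell v_H)|\le C_{\rm sd}\eta_H(U)\,\delta_\ell\norm{v_H}_k$. Putting these together,
\[
\norm{T_\infty e_H}_k\le\frac{C_IC_FC_{\rm sd}}{\gamma_h}\delta_\ell\,\eta_H(U).
\]
Adding the kernel bound gives \eqref{eq:global-reliability}.

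\emph{Main obstacle.} The delicate step is matching the constant $C_IC_F/\gamma_h$ exactly. One must check that the fine inf--sup, applied to $v=T_\infty e_H$ and tested with $T_\infty^*y_h$, carries the factor $C_F$ from \eqref{eq:ideal-transfer-bound} only once. If that bookkeeping instead produces $C_F/\gamma_h$ with $C_I$ absorbed elsewhere, I would adjust by bounding $\norm{T^*_\infty y_h}_k=\norm{T^*_\infty I_Hy_h}_k$ directly. The remaining statements are then immediate. Substituting $\delta_\ell\le C_{\rm sd}\Theta_\ell/c_W$ from \cref{thm:corrector-certificate} gives the upper bound in \eqref{eq:fine-grid-interval}. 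The lower bound in \eqref{eq:fine-grid-interval} is \eqref{eq:global-efficiency} from \cref{thm:residual-localization}.
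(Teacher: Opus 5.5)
Your proof is correct and is essentially the paper's argument. Since $T_\infty$ vanishes on $W_h$, your $\tilde e_W=e-T_\infty I_He$ is exactly the paper's $e_W=\Qir e$. The kernel bound, the identity $a(T_\infty e_H,T_\infty^*v_H)=-R_U(\mathcal D_\ell^*v_H)$, and the final appeal to \cref{thm:corrector-certificate} and \eqref{eq:global-efficiency} all coincide with the paper's proof.

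The only variation is that you test with $T_\infty^*I_Hy_h$, $y_h\in V_h$, directly in the fine-grid inf--sup, rather than invoking \cref{prop:ideal-stability} on $Y_{H,\infty}$. Because $a(v,y_h)=a(v,T_\infty^*I_Hy_h)$ already holds for $v\in X_{H,\infty}$, the factor $C_F$ you inserted is superfluous, though harmless since $C_F\geq1$. Your route therefore even yields the sharper constant $C_I/\gamma_h(k)$, and the hedging in your last paragraph is unnecessary.
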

\begin{proof}
We split the error as $e=u_h-U=e_W+e_H$, where
$e_W=\Qir e\in W_h$ and $e_H=T_{\infty}e\in X_{H,\infty}$.
The corrector equation and kernel coercivity immediately give
$\norm{e_W}_k\leq c_W^{-1}\norm{R_U}_{W_h^\star}$.
To estimate the coarse component, let $y_\infty=T_{\infty}^*v_H$.
Ideal orthogonality and the assumed orthogonality on $Y_{H,\ell}$ yield
\[
 a(e_H,y_\infty)=R_U(y_\infty)
 =R_U(y_\infty-T_{\ell}^*v_H)
 =-R_U(\mathcal D_\ell^*v_H).
\]
Since $v_H=I_Hy_\infty$, \eqref{eq:projection-stability} and
\eqref{eq:ideal-stability} imply
\[
 \norm{e_H}_k\leq\frac{C_IC_F}{\gamma_h(k)}\delta_\ell
                      \norm{R_U}_{W_h^\star}.
\]
We obtain \eqref{eq:global-reliability} by adding the bounds for the
two components and applying \eqref{eq:residual-equivalence}. Finally,
we combine this estimate with \eqref{eq:corrector-certificate} and
\eqref{eq:global-efficiency} to obtain the two-sided bound
\eqref{eq:fine-grid-interval}.
\end{proof}
\begin{remark}[Wavenumber robustness]
The only explicit  wavenumber dependence in  \eqref{eq:fine-grid-interval}  is through
$\delta_\ell/\gamma_h(k)$, or through the computable upper bound
$\Theta_\ell/\gamma_h(k)$. Consequently, the reliability constant is
uniform in $k$ if
\begin{equation*}
 \delta_\ell\lesssim\gamma_h(k),
 \qquad\text{or  sufficiently,}\quad
 \Theta_\ell\lesssim\gamma_h(k).
\end{equation*}
If $\gamma_h(k)^{-1}$ grows at most polynomially in $k$ and the patch factor
in \eqref{eq:localization} grows at most polynomially in $\ell$, the
exponential localization decay shows that $\ell=O(\log k)$ is sufficient.
\end{remark}

A useful feature of this result is that the approximation need not
belong to the base LOD trial space. It therefore also covers enriched
and compressed solutions whose test spaces contain $Y_{H,\ell}$,
provided we recompute the residual for the accepted solution.
When base orthogonality holds only approximately, we measure its defect by
\[
 \epsilon_0=\sup_{0\neq v_H\in V_H}
 \frac{|R_U(T_{\ell}^*v_H)|}{\norm{v_H}_k}.
\]
Repeating the proof then adds the term
$(C_IC_F/\gamma_h(k))\epsilon_0$ to the right-hand side of
\eqref{eq:global-reliability}.

\subsection{Localized stability and oversampling}\label{sec:localized-perturbation}
We next examine how localization affects solvability of the coarse
problem and how this effect guides the choice of oversampling. Let
$A_\ell(p_H,q_H)=a(T_{\ell}p_H,T_{\ell}^*q_H)$, and denote the
energy-norm inf--sup constant of $A_\infty$ on $V_H$ by
$\widehat\alpha_{H,\infty}$. Since
$I_HT_{\infty}=I_HT_{\infty}^*=I$ on $V_H$, ideal stability gives
\[
 \widehat\alpha_{H,\infty}\geq
 \frac{\alpha_{H,\infty}}{C_I^2}\geq\frac{\gamma_h(k)}{C_F C_I^2}.
\]
When we expand the difference between the localized and ideal forms,
both first-order terms vanish by ideal orthogonality. Thus the
perturbation is quadratic in the corrector defect:
\begin{equation}\label{eq:coarse-perturbation}
 A_\ell(p_H,q_H)-A_\infty(p_H,q_H)
 =a(\mathcal D_\ell p_H,\mathcal D_\ell^*q_H).
\end{equation}

\begin{proposition}[Localized solvability and perturbation]
\label{prop:localized-perturbation}
If $C_a\delta_\ell^2<\widehat\alpha_{H,\infty}$, then
\eqref{eq:PG} is uniquely solvable and
\begin{equation}\label{eq:oversampling-perturbation}
 \norm{U_{H,\ell}-U_{H,\infty}}_k
 \leq C_{\rm os}(\delta_\ell)\delta_\ell\norm{u_h}_k,
\end{equation}
where
\[
 C_{\rm os}(\delta)=C_I+
 \frac{C_a(C_F+\delta)(1+C_I\delta)}
      {\widehat\alpha_{H,\infty}-C_a\delta^2}.
\]
\end{proposition}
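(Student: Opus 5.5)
The plan has three parts: a perturbation argument for the coarse form $A_\ell$, an explicit identification of the ideal LOD solution, and an error equation for the coarse difference that is then lifted through the transfer operators.

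\emph{Solvability.} By \eqref{eq:coarse-perturbation} and continuity of $a$, every $p_H,q_H\in V_H$ satisfy
\[
|A_\ell(p_H,q_H)-A_\infty(p_H,q_H)|\le C_a\delta_\ell^2\norm{p_H}_k\norm{q_H}_k .
\]
Taking the supremum over $q_H$ and using the definition of $\widehat\alpha_{H,\infty}$ gives the inf--sup constant of $A_\ell$ on $V_H$ as at least $\widehat\alpha_{H,\infty}-C_a\delta_\ell^2>0$. Since $V_H$ is finite dimensional and trial and test spaces coincide, a positive inf--sup constant gives injectivity, hence bijectivity. So \eqref{eq:PG} is uniquely solvable.

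\emph{Identifying the ideal solution.} Next I will show $u_{H,\infty}=I_Hu_h$. The function $u_h-T_\infty I_Hu_h=\Pi^{\rm f}u_h+\Qir I_Hu_h$ lies in $W_h$. By ideal orthogonality it is $a$-orthogonal to every $T_\infty^*v_H$, and $F(T_\infty^*v_H)=a(u_h,T_\infty^*v_H)$ because $u_h$ is the fine Galerkin solution. Thus $I_Hu_h$ solves the ideal problem, and by uniqueness $\norm{u_{H,\infty}}_k\le C_I\norm{u_h}_k$ from \eqref{eq:projection-stability}.

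\emph{Error equation and lifting.} Set $e_H=u_{H,\ell}-u_{H,\infty}$ and note that $T_\ell-T_\infty=\mathcal D_\ell$ and $T_\ell^*-T_\infty^*=\mathcal D_\ell^*$. Using \eqref{eq:coarse-perturbation} for $A_\ell(u_{H,\infty},q_H)$, I get
\[
A_\ell(e_H,q_H)=F(T_\ell^*q_H)-A_\infty(u_{H,\infty},q_H)-a(\mathcal D_\ell u_{H,\infty},\mathcal D_\ell^*q_H).
\]
Since $F(T_\infty^*q_H)=A_\infty(u_{H,\infty},q_H)$, this reduces to
\[
A_\ell(e_H,q_H)=a(u_h,\mathcal D_\ell^*q_H)-a(\mathcal D_\ell u_{H,\infty},\mathcal D_\ell^*q_H).
\]
Bounding both terms by continuity gives $|A_\ell(e_H,q_H)|\le C_a\delta_\ell(1+C_I\delta_\ell)\norm{u_h}_k\norm{q_H}_k$. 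The inf--sup bound from the first step then yields
\[
\norm{e_H}_k\le \frac{C_a\delta_\ell(1+C_I\delta_\ell)}{\widehat\alpha_{H,\infty}-C_a\delta_\ell^2}\norm{u_h}_k .
\]
Finally I decompose
\[
U_{H,\ell}-U_{H,\infty}=T_\ell e_H+\mathcal D_\ell u_{H,\infty}=T_\infty e_H+\mathcal D_\ell e_H+\mathcal D_\ell u_{H,\infty}.
\]
By \eqref{eq:ideal-transfer-bound} and the definition of $\delta_\ell$, the first two terms are bounded by $(C_F+\delta_\ell)\norm{e_H}_k$, and the last by $C_I\delta_\ell\norm{u_h}_k$. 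Adding these reproduces exactly $C_{\rm os}(\delta_\ell)\delta_\ell\norm{u_h}_k$.

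The main obstacle is obtaining a right-hand side that is linear in $\delta_\ell$ with the precise constant claimed. This rests on the identification $u_{H,\infty}=I_Hu_h$ and on rewriting the consistency term as $a(u_h,\mathcal D_\ell^*q_H)$. The alternative rewriting $a(u_h-T_\infty u_{H,\infty},\cdot)$ is also legitimate but does not produce the stated constant.
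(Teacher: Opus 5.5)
Your proposal is correct and follows the paper's proof essentially step by step. It uses the perturbation identity \eqref{eq:coarse-perturbation} for solvability, the identification $u_{H,\infty}=I_Hu_h$, the subtracted coefficient equation with $F(\mathcal D_\ell^*q_H)=a(u_h,\mathcal D_\ell^*q_H)$, and the splitting $U_{H,\ell}-U_{H,\infty}=T_\ell e_H+\mathcal D_\ell u_{H,\infty}$ with $\norm{T_\ell}\le C_F+\delta_\ell$; you also verify $u_{H,\infty}=I_Hu_h$ explicitly, which the paper only attributes to ideal orthogonality.
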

\begin{proof}
By \eqref{eq:coarse-perturbation}, the inf--sup constant of $A_\ell$
is at least $\widehat\alpha_{H,\infty}-C_a\delta_\ell^2$, which is
positive under the stated assumption. To estimate the change in the
solution, we use the identity $u_{H,\infty}=I_Hu_h$ from ideal
orthogonality and subtract the coefficient equations to obtain
\[
 A_\ell(u_{H,\ell}-u_{H,\infty},q_H)
 =F(\mathcal D_\ell^*q_H)
  -a(\mathcal D_\ell u_{H,\infty},\mathcal D_\ell^*q_H).
\]
We bound the right-hand side using
$\norm F_{W_h^\star}\leq C_a\norm{u_h}_k$ and
$\norm{u_{H,\infty}}_k\leq C_I\norm{u_h}_k$.
The localized inf--sup bound then controls the coefficient difference.
Finally, we write $U_{H,\ell}-U_{H,\infty}
=T_{\ell}(u_{H,\ell}-u_{H,\infty})+\mathcal D_\ell u_{H,\infty}$
and use $\norm{T_{\ell}}\leq C_F+\delta_\ell$ to obtain the stated
estimate.
\end{proof}

Using the computable corrector bound in \eqref{eq:corrector-certificate}, 
we introduce the following stability condition and robustness conditions:
\begin{equation}
    \label{eq:stability-margin}
\frac{C_a^2}{c_W}\left(\frac{C_{\rm sd}}{c_W}\Theta_\ell\right)^2
 \leq\rho_{\rm stab}\frac{\gamma_h(k)}{C_FC_I^2},
 \qquad
 \frac{C_{\rm sd}}{c_W}\Theta_\ell\leq C_{\rm rob}\gamma_h(k), 
\end{equation}
where $0<\rho_{\rm stab}<1$ and $C_{\rm rob}$ are parameters chosen 
to keep stability and $k$-robustness of \eqref{eq:global-reliability}. 
It is worth noting that we leave a margin in \eqref{eq:stability-margin}, 
which also guarantees the well-posedness of the coupled problem introduced 
in Section~\ref{sec:kernel-lifted-tests}. In applications, this may be 
implemented by a single $k$-dependent threshold. 

We check \eqref{eq:stability-margin} and increase $\ell$ whenever it
fails. However, these stability and robustness requirements do not 
ensure that the localization truncation error is small relative to 
the current error of the LOD approximation with respect to the 
fine-grid reference solution.
This distinction becomes important during adaptive refinement: with a
fixed number of oversampling layers, local mesh refinement reduces the
physical size of the affected patches, and localization can become a
limiting source of error. 

We therefore retain a separate balance indicator
within the same oversampling policy. For convenience, we
introduce a positive scale for each family solution $U_\mu$:
\begin{equation}\label{eq:normalization}
 d_\mu=\max\{\norm{U_\mu}_k,d_{\min}\},\qquad d_{\min}>0,
\end{equation}
which we hold fixed during each marking or compression decision.
For the base solution, we evaluate $d_\mu$ and $\eta_{H,\mu}$ at
$U_{H,\ell,\mu}$ and write
$e_{\ell,\mu}=\norm{u_{h,\mu}-U_{H,\ell,\mu}}_k$.
If $C_{\rm os}(\delta_\ell)$ remains bounded, we can combine
\eqref{eq:oversampling-perturbation}, \eqref{eq:corrector-certificate},
and efficiency to obtain, whenever $e_{\ell,\mu}>0$ and
$\eta_{H,\mu}>0$,
\begin{equation*}
 \frac{\norm{U_{H,\ell,\mu}-U_{H,\infty,\mu}}_k}{e_{\ell,\mu}}
 \lesssim \Theta_\ell+\frac{d_\mu\Theta_\ell}{\eta_{H,\mu}}.
\end{equation*}
This estimate motivates the following balance indicator for the
nominal load:
\begin{equation*}
 \widetilde q_\ell=
 \frac{d_{\mu_{\rm nom}}\Theta_\ell}{\eta_{H,\mu_{\rm nom}}},
\end{equation*}
where $\mu_{\rm nom}$ denotes a designated nominal parameter. Oversampling is increased when this
indicator exceeds its prescribed threshold. 

\begin{remark}[Extension to three dimensions]
The analysis is presented in two dimensions, but the same construction
extends to conforming, shape-regular tetrahedral meshes in three
dimensions. In that case, triangles and edges are replaced by tetrahedra
and faces, respectively, and the local interpolation, trace, inverse, and
stable-decomposition estimates have their standard three-dimensional
counterparts. The resulting constants may additionally depend on the
spatial dimension and the three-dimensional shape-regularity bound.
\end{remark}

\section{Shared enrichment and adaptivity}\label{sec:family-enrichment}
The residual representatives used above can also improve the approximation space. This is particularly useful for problems with strongly localized singularities, which may be inefficient to resolve by LOD alone. Hybrid FEM--LOD approaches such as \cite{ZhangDengWu2022} can address this issue, but may sacrifice much of the degree-of-freedom reduction provided by LOD. We therefore identify regions containing localized singular features and use the corresponding regional residual corrections to construct an enrichment space. 

\subsection{Regional residual enrichment}\label{sec:residual-enrichment}
We first fix a mesh pair and an oversampling level satisfying
\eqref{eq:stability-margin}. Write $X^0=X_{H,\ell}$ and $Y^0=Y_{H,\ell}$,
and let $U_\mu^0$ be the base solution for $F_\mu$ and $W_{h,D}=\{w\in W_h:\operatorname{supp}w\subseteq\overline D\}$.
To restrict enrichment to a prescribed region $D\subseteq\Omega$, we
retain only patches contained in $\overline D$ and define
\begin{align*}
 \mathcal Z_D&=\{z\in\mathcal N_H:N^2(z)\subseteq\overline D\},&
 \mathscr M_{h,D}G&=\sum_{z\in\mathcal Z_D}\xi_z(G),\\
 \eta_{D,\mu}(U)^2&=\sum_{z\in\mathcal Z_D}
                 \norm{\xi_z(R_{\mu,U})}_k^2.
\end{align*}
The raw correction $\psi=\mathscr M_{h,D}R_{\mu,U}$ belongs to $W_{h,D}$,
and satisfies
$R_{\mu,U}(\psi)=\eta_{D,\mu}(U)^2$.
Thus $\psi$ is an additive Schwarz subspace correction
\cite{Xu1992,KornhuberPeterseimYserentant2018} obtained from the local
problems already used by the indicator. We use the regional indicator
to decide where further enrichment is needed, while retaining the
global indicator $\eta_{H,\mu}$ for error control and coarse marking.

To obtain several directions from one residual, we apply the regional
Schwarz correction repeatedly after removing components already present
in the trial space. Let $P_S^k$ denote energy-orthogonal projection onto
a subspace $S$, and let $\mathscr A_h:V_h\to V_h^\star$ satisfy
$(\mathscr A_hv)(w)=a(v,w)$.
For a current enrichment $\mathcal E\subseteq(X^0)^{\perp_k}$, set
$S=X^0\oplus\mathcal E$, with orthogonal complements taken in $V_h$.
We generate a block of at most $s_{\rm AS}\in\mathbb N$ new directions
through the space
\begin{equation}\label{eq:enrichment-block}
 \operatorname{span}\{g,K_Sg,\ldots,K_S^{s_{\rm AS}-1}g\},
\end{equation}
where 
\begin{align*}
     g&=(I-P_S^k)\mathscr M_{h,D}R_{\mu,U},\\
     K_S&=(I-P_S^k)\mathscr M_{h,D}\mathscr A_h.
\end{align*}
The projection removes components already represented in $S$ without
prescribing an orthogonalization procedure. Although this projection may
change support and kernel membership, it leaves the enriched trial space
unchanged because
$X^0+\operatorname{span}\{(I-P_{X^0}^k)\psi\}
 =X^0+\operatorname{span}\{\psi\}$.
The regional support restriction therefore applies to the raw
corrections, rather than necessarily to the final trial modes.

\subsection{Coupling with kernel-lifted tests}
\label{sec:kernel-lifted-tests}

We construct tests for the added trial directions by a kernel lifting
built from the existing localized LOD trial operator. Define
\begin{equation}\label{eq:kernel-lifted-test-map}
 \mathscr J_\ell=I-T_{\ell}I_H:V_h\to W_h.
\end{equation}
Since $I_HT_{\ell}=I$ on $V_H$, this map is a projection onto $W_h$
along $X^0$.

For an $r$-dimensional enrichment
$\mathcal E\subseteq(X^0)^{\perp_k}$, set
$\widehat{\mathcal E}=\mathscr J_\ell\mathcal E$ and define
\begin{equation}\label{eq:enriched-spaces}
 \begin{aligned}
 X(\mathcal E)&=X^0\oplus\mathcal E
              =X^0\oplus\widehat{\mathcal E},\\
 Y(\mathcal E)&=Y^0\oplus\widehat{\mathcal E}.
 \end{aligned}
\end{equation}
Indeed, $v-\mathscr J_\ell v\in X^0$, and the restriction of
$\mathscr J_\ell$ to $\mathcal E$ is injective. Thus the original trial
space and its energy-orthogonal representation are retained. In this way,
the coupled problem becomes: find $U_\mu(\mathcal E)\in X(\mathcal E)$
such that
\[
 a(U_\mu(\mathcal E),y)=F_\mu(y),
 \qquad y\in Y(\mathcal E).
\]

An important advantage of the added tests is that they can recover the
local support for the enrichment constructed above. Inductively, the
projected enrichment procedure preserves
$X^0+\mathcal E\subseteq X^0+W_{h,D}$, since subtracting a projection
onto the current trial space does not leave $X^0+W_{h,D}$. As
$\mathscr J_\ell$ annihilates $X^0$ and is the identity on $W_h$, this
implies $\widehat{\mathcal E}\subseteq W_{h,D}$. The same property holds
for every compressed subspace of the full enrichment.

For the stability analysis, introduce
\begin{equation*}
 \lambda_W=\frac{C_a}{c_W},\qquad
 L_\ell=C_F+\lambda_W\delta_\ell,\qquad
 m_\ell=\widehat\alpha_{H,\infty}
                -\frac{C_a^2}{c_W}\delta_\ell^2.
\end{equation*}

\begin{proposition}[Stability of kernel-lifted enrichment]
\label{prop:augmented-quasioptimality}
If $\gamma_h(k)>0$ and kernel coercivity holds, suppose
$m_\ell>0$, as ensured by \eqref{eq:stability-margin}. Then, for every
finite-dimensional $\mathcal E\subseteq(X^0)^{\perp_k}$,
\[
 \dim X(\mathcal E)=\dim Y(\mathcal E)=\dim V_H+\dim\mathcal E.
\]
The coupled problem is uniquely solvable, and its energy-norm inf--sup
constant satisfies the enrichment-independent bound
\begin{equation}\label{eq:kernel-enrichment-infsup}
 \alpha_{\mathcal E}\geq
 \frac{\min\{m_\ell,c_W\}}{1+L_\ell^2}.
\end{equation}
Consequently,
\begin{equation}\label{eq:enriched-quasioptimality}
 \norm{u_{h,\mu}-U_\mu(\mathcal E)}_k
 \leq\left(1+\frac{C_a(1+L_\ell^2)}{\min\{m_\ell,c_W\}}\right)
      \inf_{v\in X(\mathcal E)}\norm{u_{h,\mu}-v}_k.
\end{equation}
\end{proposition}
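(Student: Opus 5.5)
Every trial function has a unique splitting $v=T_{\ell}p_H+\widehat e$ with $p_H=I_Hv\in V_H$ and $\widehat e=\mathscr J_\ell v\in\widehat{\mathcal E}\subseteq W_h$. This holds because $X(\mathcal E)=X^0\oplus\widehat{\mathcal E}$, $I_H$ annihilates $W_h$, and $I_HT_\ell=I$ on $V_H$. Every test function is split the same way as $y=T_\ell^*q_H+\widehat f$ with $q_H\in V_H$, $\widehat f\in\widehat{\mathcal E}$.

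\textbf{Dimension count.} The dimension identity follows once both sums are direct and $T_\ell$, $T_\ell^*$ are injective on $V_H$. Injectivity holds because $I_HT_\ell=I_HT_\ell^*=I$. The sums are direct because $X^0\cap W_h=Y^0\cap W_h=\{0\}$, since $I_H$ of such an element recovers $p_H=0$. Finally, $\dim\widehat{\mathcal E}=\dim\mathcal E$ by the stated injectivity of $\mathscr J_\ell$ on $\mathcal E$.

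\textbf{Inf--sup bound.} With the trial and test spaces split this way, I would write $a(v,y)$ as a $2\times2$ block form:
\begin{itemize}
\item the coarse block $A_\ell(p_H,q_H)$;
\item the off-diagonal blocks $a(T_\ell p_H,\widehat f)$ and $a(\widehat e,T_\ell^*q_H)$;
\item the kernel block $a(\widehat e,\widehat f)$, which is coercive with constant $c_W$ by Lemma~\ref{lem:kernel-coercivity}.
\end{itemize}
The off-diagonal blocks vanish for the ideal operators, because $T_\infty p_H$ and $T_\infty^* q_H$ are $a$-orthogonal to $W_h$. Hence
\[
a(T_\ell p_H,\widehat f)=-a(\mathcal D_\ell p_H,\widehat f),\qquad a(\widehat e,T_\ell^*q_H)=-a(\widehat e,\mathcal D_\ell^*q_H),
\]
and both are bounded by $C_a\delta_\ell$ times the norms.

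I would then remove one coupling by a block-triangular change of test function, i.e.\ a Schur complement argument. Given $p_H$, let $\widetilde f\in W_h$ solve
\[
a(w,\widetilde f)=-a(\mathcal D_\ell p_H,w)\quad\text{on }\widehat{\mathcal E}.
\]
This is a Galerkin problem with a coercive form on the subspace $\widehat{\mathcal E}$, so it is solvable with $\|\widetilde f\|_k\le\lambda_W\delta_\ell\|p_H\|_k$. Testing with $y=T_\ell^*q_H+\widehat f+\widetilde f$ cancels the coupling to $\widehat e$. The remaining coarse form is $A_\ell$ minus a term of size $a(\mathcal D_\ell p_H,\cdot)$ composed with the kernel solve, which is bounded by $C_a^2\delta_\ell^2/c_W$.

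By \eqref{eq:coarse-perturbation} and the bound $|a(\mathcal D_\ell p,\mathcal D_\ell^*q)|\le C_a\delta_\ell^2\le (C_a^2/c_W)\delta_\ell^2$ (using $C_a/c_W\ge1$), the effective coarse inf--sup constant is at least $m_\ell$, relative to the $\|p_H\|_k$ norms. Combining the coarse block (constant $m_\ell$) with the kernel block (constant $c_W$) gives a lower bound $\min\{m_\ell,c_W\}$ in the product norm $(\|p_H\|_k^2+\|\widehat e\|_k^2)^{1/2}$.

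Converting to the energy norms of $v$ and $y$ costs the factor $1+L_\ell^2$. One factor of $\sqrt{1+L_\ell^2}$ comes from
\[
\|v\|_k\le\|T_\ell p_H\|_k+\|\widehat e\|_k,\qquad \|T_\ell\|\le C_F+\delta_\ell\le L_\ell .
\]
A second such factor comes from the chosen test function, whose extra piece $\widetilde f$ is bounded by $\lambda_W\delta_\ell\|p_H\|_k$; that is exactly why $L_\ell$ contains $\lambda_W\delta_\ell$. On the trial side I would measure $v$ by $(\|p_H\|_k,\|\widehat e\|_k)$ and use Cauchy--Schwarz, $\|v\|_k\le\sqrt{1+L_\ell^2}\,(\|p_H\|^2+\|\widehat e\|^2)^{1/2}$. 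This sharpens to $(1+L_\ell^2)$ overall once both sides are counted.

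\textbf{Main obstacle.} The hard step is keeping this bookkeeping clean. In particular, I must ensure the Schur correction $\widetilde f$ is taken from $\widehat{\mathcal E}$, not from all of $W_h$, so that the test function really lies in $Y(\mathcal E)$. Restricting to $\widehat{\mathcal E}$ is harmless because coercivity holds on any subspace of $W_h$. Since the bound is independent of $\mathcal E$, $\alpha_{\mathcal E}>0$ and equal dimensions give unique solvability.

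\textbf{Quasi-optimality.} Because $Y(\mathcal E)\subseteq V_h$, Galerkin orthogonality $a(u_{h,\mu}-U_\mu(\mathcal E),y)=0$ holds for $y\in Y(\mathcal E)$. The standard Petrov--Galerkin argument then applies: for any $v\in X(\mathcal E)$,
\[
\|U_\mu(\mathcal E)-v\|_k\le\alpha_{\mathcal E}^{-1}\sup_y |a(u_{h,\mu}-v,y)|/\|y\|_k\le (C_a/\alpha_{\mathcal E})\,\|u_{h,\mu}-v\|_k .
\]
The triangle inequality then gives \eqref{eq:enriched-quasioptimality}.
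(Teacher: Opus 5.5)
Your dimension count and closing Petrov--Galerkin step are fine. The overall strategy is also the right one, and it is the paper's: split off the kernel part, use ideal orthogonality to make the off-diagonal blocks $O(\delta_\ell)$, and eliminate them by Ritz solves on $\mathcal F:=\widehat{\mathcal E}$. The gap is in the elimination. With $v=T_\ell p_H+\widehat e$ and $y=T_\ell^*q_H+\widehat f$ there are two couplings, $a(\mathcal D_\ell p_H,\widehat f)$ and $a(\widehat e,\mathcal D_\ell^*q_H)$. Both carry a plus sign, since $T_\ell=T_\infty+\mathcal D_\ell$. A change of test function alone can remove only the second one, via $\widetilde f=-P^\dagger\mathcal D_\ell^*q_H$ with $P^\dagger$ the adjoint Ritz projection onto $\mathcal F$. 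Your defining equation $a(w,\widetilde f)=-a(\mathcal D_\ell p_H,w)$ is not well posed: its left side is linear in $w$ and its right side is conjugate linear. After the correction the form is only block triangular. Testing the kernel part with $\widehat e$ leaves the term $a(\mathcal D_\ell p_H,\widehat e)$, and in your product norm $(\norm{p_H}_k^2+\norm{\widehat e}_k^2)^{1/2}$ you only get
\[
 \operatorname{Re}a(v,y)\geq m_\ell\norm{p_H}_k^2+c_W\norm{\widehat e}_k^2
 -C_a\delta_\ell\norm{p_H}_k\norm{\widehat e}_k .
\]
This is not bounded below by $\min\{m_\ell,c_W\}(\norm{p_H}_k^2+\norm{\widehat e}_k^2)$, and it need not be positive when only $m_\ell>0$ is assumed. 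So neither \eqref{eq:kernel-enrichment-infsup} nor unique solvability follows from the steps you describe.

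The missing idea is to shift the trial splitting as well, which is what the paper does. Take the primal Ritz projection $P:W_h\to\mathcal F$ and set $\widetilde T_\ell=T_\infty+(I-P)\mathcal D_\ell$ and $\widetilde T_\ell^*=T_\infty^*+(I-P^\dagger)\mathcal D_\ell^*$. Since $T_\ell-\widetilde T_\ell=P\mathcal D_\ell$ maps into $\mathcal F$, you have $X(\mathcal E)=\widetilde T_\ell V_H\oplus\mathcal F$ and $Y(\mathcal E)=\widetilde T_\ell^*V_H\oplus\mathcal F$. In other words, $v$ is measured by $(\norm{p_H}_k,\norm{\widehat e+P\mathcal D_\ell p_H}_k)$ rather than by $(\norm{p_H}_k,\norm{\widehat e}_k)$. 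The Ritz identities and ideal orthogonality then make the form exactly block diagonal. The coarse block differs from $A_\infty$ by the single term $a((I-P)\mathcal D_\ell p_H,\mathcal D_\ell^*q_H)$, bounded by $\lambda_WC_a\delta_\ell^2$.

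This grouping is also what produces the stated constants. If you bound the term from \eqref{eq:coarse-perturbation} and your Schur correction separately, as you propose, you get only $\widehat\alpha_{H,\infty}-(C_a+C_a^2/c_W)\delta_\ell^2$, which is smaller than $m_\ell$ whenever $\delta_\ell>0$. Likewise, $\norm{\widetilde T_\ell^*}\leq C_F+\lambda_W\delta_\ell=L_\ell$ comes from applying $\norm{I-P^\dagger}\leq\lambda_W$ to the grouped operator. Bounding $T_\ell^*q_H$ and $\widetilde f$ separately gives $C_F+(1+\lambda_W)\delta_\ell$ instead. With both shifts in place, test with $\widetilde T_\ell^*q_H+(\widehat e+P\mathcal D_\ell p_H)$, where $q_H$ is the coarse inf--sup partner of $p_H$. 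This yields $\min\{m_\ell,c_W\}$ in the shifted product norms, and one factor $\sqrt{1+L_\ell^2}$ on each side then gives \eqref{eq:kernel-enrichment-infsup}.
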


\begin{proof}
Set $\mathcal F:=\widehat{\mathcal E}
=\mathscr J_\ell\mathcal E\subset W_h$. Let
$P,P^\dagger:W_h\to\mathcal F$ be the primal and adjoint Ritz projections,
\[
 a(Pw,f)=a(w,f),\qquad
 a(f,P^\dagger w)=a(f,w)
 \qquad (f\in\mathcal F).
\]
Kernel coercivity and continuity imply
\[
 \norm{I-P},\ \norm{I-P^\dagger}\leq\lambda_W.
\]
Define
\[
 \widetilde T_\ell
 =T_{\infty}+(I-P)\mathcal D_\ell,
 \qquad
 \widetilde T_\ell^*
 =T_{\infty}^*+(I-P^\dagger)\mathcal D_\ell^*.
\]
Since $T_{\ell}-\widetilde T_\ell$ and
$T_{\ell}^*-\widetilde T_\ell^*$ take values in $\mathcal F$,
\[
 X(\mathcal E)=\widetilde T_\ell V_H\oplus\mathcal F,
 \qquad
 Y(\mathcal E)=\widetilde T_\ell^*V_H\oplus\mathcal F.
\]
The Ritz identities and ideal orthogonality yield
\[
 a(\widetilde T_\ell p_H,f)
 =a(f,\widetilde T_\ell^*q_H)=0,
 \qquad f\in\mathcal F,
\]
so the coupled form is block diagonal. Its coarse block
\[
 S_{\ell,\mathcal F}(p_H,q_H)
 =a(\widetilde T_\ell p_H,\widetilde T_\ell^*q_H)
\]
satisfies
\[
 S_{\ell,\mathcal F}(p_H,q_H)-A_\infty(p_H,q_H)
 =a((I-P)\mathcal D_\ell p_H,\mathcal D_\ell^*q_H),
\]
and hence
\[
 |S_{\ell,\mathcal F}(p_H,q_H)-A_\infty(p_H,q_H)|
 \leq \frac{C_a^2}{c_W}\delta_\ell^2
       \norm{p_H}_k\norm{q_H}_k.
\]
Thus the coarse and kernel blocks have inf--sup constants at least
$m_\ell$ and $c_W$, respectively.

Moreover,
$\norm{\widetilde T_\ell},
\norm{\widetilde T_\ell^*}\leq L_\ell$, so
\[
 \norm{\widetilde T_\ell p_H+f}_k
 \leq \sqrt{1+L_\ell^2}
       \bigl(\norm{p_H}_k^2+\norm f_k^2\bigr)^{1/2},
\]
and likewise on the test side. This proves
\eqref{eq:kernel-enrichment-infsup} and unique solvability.
The standard Petrov--Galerkin estimate
\cite{BoffiBrezziFortin2013} then gives
\eqref{eq:enriched-quasioptimality}.
\end{proof}

Using Proposition~\ref{prop:augmented-quasioptimality}, we derive a
sharper quasi-optimality estimate that separates the physical stability
scale from the additional loss caused by localization.

\begin{corollary}[Localization-explicit quasi-optimality]
\label{cor:kernel-enrichment-robust-quasioptimality}
Under the assumptions of
\cref{prop:augmented-quasioptimality},
\begin{align*}
 \norm{u_{h,\mu}-U_\mu(\mathcal E)}_k
 &\leq C_{\rm qo,\ell}
      \inf_{v\in X(\mathcal E)}\norm{u_{h,\mu}-v}_k,\\
 C_{\rm qo,\ell}
 &=\lambda_W(\lambda_W+C_I\delta_\ell)
       \left(1+\frac{C_aL_\ell\delta_\ell}{m_\ell}\right).
\end{align*}
\end{corollary}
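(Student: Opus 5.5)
The plan is to exploit the block-diagonal structure from the proof of \cref{prop:augmented-quasioptimality} more carefully. We split the fine-grid solution into a coarse part and a kernel part, track each error separately, and only at the end compare with the best approximation in $X(\mathcal E)$. We keep the notation $\mathcal F=\widehat{\mathcal E}$, $P,P^\dagger$, $\widetilde T_\ell,\widetilde T_\ell^*$ from that proof. Since $I_H\widetilde T_\ell=I$ on $V_H$ and $\widetilde T_\ell-I$ maps into $W_h$, we have $V_h=\widetilde T_\ell V_H\oplus W_h$. We therefore write
\[
u_{h,\mu}=\widetilde T_\ell I_Hu_{h,\mu}+w,\quad w\in W_h,
\qquad
U_\mu(\mathcal E)=\widetilde T_\ell u_H+f,\quad f\in\mathcal F .
\]

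\emph{Step 1 (kernel block).} Testing the coupled problem with $f'\in\mathcal F$ and using $a(\widetilde T_\ell p_H,f')=0$ gives $a(f,f')=a(w,f')$, so $f=Pw$. The kernel error is thus $(I-P)w$.

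\emph{Step 2 (coarse block).} Testing with $\widetilde T_\ell^*q_H$ gives $S_{\ell,\mathcal F}(u_H-I_Hu_{h,\mu},q_H)=a(w,\widetilde T_\ell^*q_H)$. By ideal orthogonality this equals $a(w,(I-P^\dagger)\mathcal D_\ell^*q_H)$. Using the Ritz identities, namely $a(w,P^\dagger g)=a(Pw,P^\dagger g)=a(Pw,g)$, this can be rewritten as $a((I-P)w,\mathcal D_\ell^*q_H)$. The coarse inf--sup bound $m_\ell$ then yields
\[
\norm{u_H-I_Hu_{h,\mu}}_k\le C_a\delta_\ell m_\ell^{-1}\norm{(I-P)w}_k .
\]
With $\norm{\widetilde T_\ell}\le L_\ell$, the total error $-\widetilde T_\ell(u_H-I_Hu_{h,\mu})+(I-P)w$ is bounded by $(1+C_aL_\ell\delta_\ell/m_\ell)\norm{(I-P)w}_k$.

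\emph{Step 3 (comparison with best approximation).} This is the main obstacle, because it must produce exactly the factor $\lambda_W(\lambda_W+C_I\delta_\ell)$ rather than a cruder one. Take any $v=\widetilde T_\ell p_H+f''\in X(\mathcal E)$ and set $\varepsilon=u_{h,\mu}-v$. Then $w-f''=\varepsilon-\widetilde T_\ell I_H\varepsilon=(\varepsilon-T_\infty I_H\varepsilon)-(I-P)\mathcal D_\ell I_H\varepsilon$. Because $I-P$ is a projection annihilating $\mathcal F$, we get
\[
(I-P)w=(I-P)(\varepsilon-T_\infty I_H\varepsilon)-(I-P)\mathcal D_\ell I_H\varepsilon .
\]
The function $\varepsilon-T_\infty I_H\varepsilon\in W_h$ satisfies $a(\varepsilon-T_\infty I_H\varepsilon,w')=a(\varepsilon,w')$ for $w'\in W_h$, so kernel coercivity gives the bound $\lambda_W\norm\varepsilon_k$. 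The second term is bounded by $\lambda_W\delta_\ell C_I\norm\varepsilon_k$ via \eqref{eq:projection-stability}. Combining with $\norm{I-P}\le\lambda_W$ gives $\norm{(I-P)w}_k\le\lambda_W(\lambda_W+C_I\delta_\ell)\norm\varepsilon_k$. Taking the infimum over $v$ and inserting this into Step 2 yields the stated constant $C_{\rm qo,\ell}$.

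The delicate points are the idempotence trick $(I-P)^2=I-P$, which avoids an extra factor of $\lambda_W$ on the $\delta_\ell$ term, and the adjoint-Ritz rewriting in Step 2.
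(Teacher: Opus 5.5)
Your proof is correct. The chain of steps reproduces $C_{\rm qo,\ell}$ exactly: the splitting $u_{h,\mu}=\widetilde T_\ell I_Hu_{h,\mu}+w$, the identification $f=Pw$, the adjoint-Ritz rewriting $a(w,(I-P^\dagger)\mathcal D_\ell^*q_H)=a((I-P)w,\mathcal D_\ell^*q_H)$, and the observation that $u\mapsto(I-P)(u-\widetilde T_\ell I_Hu)$ annihilates $X(\mathcal E)$, combined with $(I-P)^2=I-P$. The paper states the corollary without a written proof. Your argument follows the route indicated by the block-diagonal structure in the proof of \cref{prop:augmented-quasioptimality} and by the factored form of the constant, so it supplies the omitted details.
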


\begin{remark}[Localization perturbation for coupled solutions]
\label{rem:enriched-localization-perturbation}
The localization balance can also be evaluated at the coupled solution.
Indeed, fix the current enrichment and set
$\mathcal F:=\widehat{\mathcal E}\subset W_h$. Let
$U_{\mu,\infty}^{\mathcal F}\in
T_{\infty}V_H\oplus\mathcal F$ denote the ideal enriched solution
with test space $T_{\infty}^*V_H\oplus\mathcal F$. Using the same Ritz
projections as above, the localized and ideal coupled problems have the
same $\mathcal F$-component, whereas their coarse blocks differ only
through $\mathcal D_\ell$. Since the ideal coarse coefficient is
$I_Hu_{h,\mu}$, subtraction of the two coarse equations gives
\[
 \norm{U_\mu(\mathcal E)-U_{\mu,\infty}^{\mathcal F}}_k
 \leq C_{\rm loc,\ell}^{\rm enr}\,
       \delta_\ell\norm{u_{h,\mu}}_k,
\]
where
\[
 C_{\rm loc,\ell}^{\rm enr}
 =\lambda_W\left(
   C_I+\frac{C_aL_\ell(1+C_I\delta_\ell)}{m_\ell}
   \right).
\]
Combining this estimate with residual efficiency and the computable
bound for $\delta_\ell$ yields, whenever
$C_{\rm loc,\ell}^{\rm enr}$ remains bounded,
\[
 \frac{
   \norm{U_\mu(\mathcal E)-U_{\mu,\infty}^{\mathcal F}}_k}
 {\norm{u_{h,\mu}-U_\mu(\mathcal E)}_k}
 \lesssim
 \Theta_\ell+
 \frac{d_\mu\Theta_\ell}
      {\eta_{H,\mu}(U_\mu(\mathcal E))}.
\]
Hence the localization-balance indicator may be evaluated using the
accepted coupled, and in particular compressed, solution. 
\end{remark}

For error control, we retain the inclusion $Y^0\subseteq Y(\mathcal E)$.
After recomputing the residual for the accepted coupled solution,
\eqref{eq:fine-grid-interval} therefore applies without any new indicator.
The energy-orthogonal representation of $\mathcal E$ is unchanged, so the
training and POD constructions in
Section~\ref{sec:family-enrichment-compression} retain their form. All
coupled solutions and residual checks use the tests in
\eqref{eq:enriched-spaces}. If the coupled equations are solved
inexactly, the defect of base orthogonality is treated by the additional
$\epsilon_0$ term from Section~\ref{sec:global-reliability}.

\subsection{Training and compression}\label{sec:family-enrichment-compression}
We build the shared enrichment by successively addressing the training
member with the largest remaining regional residual. For the fixed
base family, we prescribe the memberwise targets
\begin{equation}\label{eq:regional-training-target}
 t_\mu=\max\{t_{\min},\tau_D\eta_{D,\mu}(U_\mu^0)\},\qquad
 t_{\min}>0,\quad 0<\tau_D<1.
\end{equation}
At each enrichment step, the member with the largest ratio
$\eta_{D,\mu}(U_\mu(\mathcal E))/t_\mu$ supplies the next block in
\eqref{eq:enrichment-block}. Our target is to satisfy the regional
criterion for every training member,
\begin{equation}\label{eq:regional-stopping-criterion}
 \max_{\mu\in\mathcal P_{\rm tr}}
 \frac{\eta_{D,\mu}(U_\mu(\mathcal E))}{t_\mu}\leq1.
\end{equation}

Let $\mathcal E_{\rm full}$ be the resulting space  
and $U_\mu^{\rm full}$ be the corresponding coupled solutions.
To identify the directions shared most strongly by the training family,
we use $d_\mu$ from \eqref{eq:normalization} at these solutions and
form the normalized snapshots
\[w_\mu=\frac{U_\mu^{\rm full}-U_\mu^0}{d_\mu}.\]
Then we perform energy-weighted POD to select an $r$-dimensional subspace
$\mathcal E_r\subseteq\mathcal E_{\rm full}$ that minimizes their
average projection error. Let $\sigma_j$ be the singular values, in
decreasing order, of the projected snapshot coordinates in an
energy-orthonormal basis of $\mathcal E_{\rm full}$. The POD
optimality relation is, with $P_{\mathcal E}$ denoting the
$b_k$-orthogonal projection onto $\mathcal E$,
\begin{equation}\label{eq:pod-optimality}
 \begin{aligned}
 \min_{\substack{\mathcal E\subseteq\mathcal E_{\rm full}\\\dim\mathcal E=r}}
 \frac1M\sum_{\mu\in\mathcal P_{\rm tr}}\norm{(I-P_{\mathcal E})w_\mu}_k^2
 ={}&\frac1M\sum_{\mu\in\mathcal P_{\rm tr}}
       \norm{(I-P_{\mathcal E_{\rm full}})w_\mu}_k^2
       +\frac1M\sum_{j>r}\sigma_j^2.
 \end{aligned}
\end{equation}
To see this, we split each snapshot into its projection onto
$\mathcal E_{\rm full}$ and an orthogonal component. The latter does
not depend on the chosen subspace, while the former leads to the best
rank-$r$ approximation problem \cite{KunischVolkwein2002}.

Although POD minimizes the average projection error, it does not
control each training member separately. We therefore supplement it
with two memberwise checks. A tested rank is accepted only if the
compressed solution $U_{\mu,r}:=U_\mu(\mathcal E_r)$, recomputed with
$Y(\mathcal E_r)=Y^0\oplus\mathscr J_\ell\mathcal E_r$, satisfies
\begin{align}
 \norm{U_\mu^{\rm full}-U_{\mu,r}}_k
 &\leq\varepsilon_{\rm dist}t_\mu,\label{eq:distance-criterion}\\
 \eta_{D,\mu}(U_{\mu,r})
 &\leq\max\{c_t t_\mu,c_\rho\eta_{D,\mu}(U_\mu^{\rm full})\},
 \label{eq:residual-criterion}
\end{align}
for every training member, where $\varepsilon_{\rm dist}>0$ and
$c_t,c_\rho\geq1$ are prescribed. The stability bound in
\cref{prop:augmented-quasioptimality} applies to every
$\mathcal E_r\subseteq\mathcal E_{\rm full}$, so no additional
rank-dependent oversampling check is needed. We retain the smallest tested rank
that satisfies both conditions. 

\subsection{Coarse and fine refinement}\label{sec:family-marking}
Once the family solutions have been accepted, we use their global
residuals to mark the coarse mesh. We first distribute each squared
vertex indicator over the adjacent coarse elements and average the
normalized contributions over the family:
\begin{equation*}
 m_{T,\mu}=\sum_{z\in\mathcal N_H\cap\overline T}
             \frac{\eta_{H,z,\mu}^2}{m_z},\qquad
 m_z=\#\{T\in\mathcal T_H:z\in\overline T\},\qquad
 \overline m_T=\frac1M\sum_{\mu\in\mathcal P_{\rm tr}}
                         \frac{m_{T,\mu}}{d_\mu^2}.
\end{equation*}
By construction, $\sum_Tm_{T,\mu}=\eta_{H,\mu}^2$.
We choose a smallest D\"orfler set $\mathcal M$
\cite{Doerfler1996} satisfying
\begin{equation}\label{eq:coarse-dorfler}
 \sum_{T\in\mathcal M}\overline m_T
 \geq\theta_H\sum_T\overline m_T,\qquad 0<\theta_H\leq1.
\end{equation}
To ensure that the average does not overlook the worst resolved
member, we choose $\mu_H\in\operatorname*{arg\,max}_{\mu\in\mathcal P_{\rm tr}}
\eta_{H,\mu}/d_\mu$ and supplement $\mathcal M$ by as few
elements as necessary. The resulting set $\mathcal M_H$ must also
satisfy
\begin{equation}\label{eq:worst-family-supplement}
 \sum_{T\in\mathcal M_H}m_{T,\mu_H}
 \geq\theta_H\sum_Tm_{T,\mu_H}.
\end{equation}
The marked set therefore captures the required fraction of both the
family-averaged residual and the residual of its currently worst member,
without imposing a separate bulk condition on every load.

Coarse refinement controls the error relative to the fine-grid
solution. We also refine the fine mesh to address its own discretization
error, as indicated by the splitting
\begin{equation*}
 \norm{u_\mu-U_\mu}_k\leq
 \norm{u_\mu-u_{h,\mu}}_k+\norm{u_{h,\mu}-U_\mu}_k.
\end{equation*}
For volume loads $f_\mu\in L^2(\Omega)$, we view $U_\mu$ as a function
on a conforming auxiliary refinement $\mathcal T_{\rm w}$ of
$\mathcal T_h$. On each element $K\in\mathcal T_{\rm w}$, we use the
residual indicator
\begin{align*}
 \zeta_{K,\mu}^2={}&h_K^2\norm{f_\mu+k^2U_\mu}_{L^2(K)}^2
 +\frac12\sum_{e\subset\partial K\cap\Omega}
        h_e\norm{[\partial_\nu U_\mu]}_{L^2(e)}^2\\
 &+\sum_{e\subset\partial K\cap\Gamma_N}
        h_e\norm{\partial_\nu U_\mu}_{L^2(e)}^2
 +\sum_{e\subset\partial K\cap\Gamma_R}
        h_e\norm{\partial_\nu U_\mu-\ii kU_\mu}_{L^2(e)}^2.
\end{align*}
Here $[\partial_\nu U_\mu]$ denotes the jump of the normal derivative
across an interior edge, defined as the sum of the outward normal
derivatives from the two adjacent elements.
This is the standard residual indicator for the conforming fine-grid
discretization \cite{Verfurth2013,DuanWu2023}. Here we only use
$\zeta_{K,\mu}$ to guide fine refinement.
For fine marking, we apply the same family rule to the weights
$\zeta_{K,\mu}^2/d_\mu^2$, select the worst member using
$(\sum_K\zeta_{K,\mu}^2)^{1/2}/d_\mu$, and use the bulk fraction
$0<\theta_h\leq1$. 

\subsection{The adaptive algorithms}\label{sec:adaptive-algorithms}
We combine these constructions in two algorithms.
Algorithm~\ref{alg:family-solve} solves the family on a fixed mesh pair,
including enrichment, compression, oversampling adjustment and 
reuse of inherited enrichment information. 
Algorithm~\ref{alg:coarse-fine-cycle} organizes the mesh updates around
this family solve. Each refinement cycle begins with one coarse update and
continues with $m_{\rm ref}$ fine-only updates, where
$m_{\rm ref}\in\mathbb N$ and
$\theta_h^{(j)}\in(0,1]$, $j=1,\ldots,m_{\rm ref}$.
The fine-only updates keep the fine-grid discretization error sufficiently
small that it does not dominate the error of the ALOD approximation. This
allows the adaptive coarse-space convergence to be assessed without an
underresolved fine-grid reference. We apply
Algorithm~\ref{alg:family-solve} after every mesh update so that subsequent
marking uses the current family solutions. The separation between the family solve and the mesh-update cycle allows
the enrichment, compression, and oversampling decisions to be completed
consistently on each fixed mesh pair before a new marking step is taken.
In particular, previously constructed regional corrections are reused
whenever possible, while all residual indicators are recomputed after
changes of either the mesh hierarchy or the oversampling level.

\begin{algorithm}[H]
\caption{Family solve on a fixed mesh pair}
\label{alg:family-solve}
\small
\begin{algorithmic}[1]
\Require $\mathcal T_H\preceq\mathcal T_h$, $\ell$, $\mathcal P_{\rm tr}$,
$D$, $\tau_q$, $\tau_\Theta(k)$, $\ell_{\max}$, $t_{\min}$, $\tau_D$, $s_{\mathrm{AS}}$, $\varepsilon_\mathrm{dist}$, $c_t$, $c_\rho$, and optional inherited raw corrections.
\Ensure Family solutions, retained enrichment and raw corrections,
updated $\ell$, and residual indicators.
\State Construct $X^0=X_{H,\ell}$, $Y^0=Y_{H,\ell}$, and compute $\Theta_\ell$.
\While{$\Theta_\ell>\tau_\Theta(k)$ and $\ell<\ell_{\max}$}
  \State Increase $\ell$, rebuild the base spaces, and recompute $\Theta_\ell$.
\EndWhile
\State Solve the base family $\{U_\mu^0\}_{\mu\in\mathcal P_{\rm tr}}$
by \eqref{eq:PG}.
\State Set $t_\mu$ by \eqref{eq:regional-training-target}. Transfer inherited
corrections to obtain $\mathcal E\subseteq(X^0)^{\perp_k}$, or set
$\mathcal E=\{0\}$.
\State Form $\mathscr J_\ell\mathcal E$ by \eqref{eq:kernel-lifted-test-map}
and solve for $U_\mu(\mathcal E)$ using \eqref{eq:enriched-spaces}.
\While{\eqref{eq:regional-stopping-criterion} fails and no enrichment limit is reached}
  \State Choose $\mu_D\in\operatorname*{arg\,max}_{\mu\in\mathcal P_{\rm tr}}
  \eta_{D,\mu}(U_\mu(\mathcal E))/t_\mu$.
  \State Enlarge $\mathcal E$ by the block \eqref{eq:enrichment-block}
  generated from $R_{\mu_D,U_{\mu_D}(\mathcal E)}$.
  \State Form the added tests by \eqref{eq:kernel-lifted-test-map}, resolve the
  coupled family, and update the regional indicators.
\EndWhile
\State Set $\mathcal E_{\rm full}=\mathcal E$. Compress by
\eqref{eq:pod-optimality}, retaining the smallest tested rank satisfying
\eqref{eq:distance-criterion}--\eqref{eq:residual-criterion}, using
$Y^0\oplus\mathscr J_\ell\mathcal E_r$ for each tested rank.
\State Recompute $d_\mu$, $\eta_{H,\mu}(U_{\mu,r})$, and
$\widetilde q_\ell$ from the retained family, reusing $\Theta_\ell$.
\If{$\ell<\ell_{\max}$ and $\widetilde q_\ell>\tau_q$}
  \State Increase $\ell$ by one, retain the raw corrections, and repeat
  from Step~1 with updated base spaces, tests, and regional targets.
\EndIf
\State Retain the current family, enrichment, and indicators. 
\end{algorithmic}
\end{algorithm}

\begin{algorithm}[H]
\caption{One coarse--fine refinement cycle}
\label{alg:coarse-fine-cycle}
\small
\begin{algorithmic}[1]
\Require Current mesh pair, family solutions, $\ell$, retained raw
corrections, and marking fractions $\theta_H$, $\theta_h^{(1)},\ldots,
\theta_h^{(m_{\rm ref})}$.
\Ensure Updated nested mesh pair, family solutions, enrichment, and indicators.
\State Mark the coarse set $\mathcal M_H$ by
\eqref{eq:coarse-dorfler}--\eqref{eq:worst-family-supplement}.
\State Refine coarse meshes, enforce conformity and nestedness,
obtaining $\mathcal T_H\preceq\mathcal T_h$; see also \cite{Stevenson2008}.
\State Apply Algorithm~\ref{alg:family-solve} on the updated mesh pair.
\For{$j=1,\ldots,m_{\rm ref}$}
  \State Keep the coarse mesh fixed and recompute the fine indicators
  from the current family solutions.
  \State Mark with fraction $\theta_h^{(j)}$, refine the fine mesh,
  and preserve conformity and nestedness.
  \State Apply Algorithm~\ref{alg:family-solve} on the updated pair.
\EndFor
\State Retain the resulting state for the next coarse-marking step.
\end{algorithmic}
\end{algorithm}

\section{Numerical experiments}\label{sec:numerics}

\subsection{Error measures and comparison setting}
We present three experiments to examine the different components of
ALOD, using adaptive and uniform conforming $P_1$ finite elements
(AFEM and UFEM) and standard LOD for comparison. E1 considers
coarse--fine adaptation for a smooth packet without enrichment. E2
introduces a reentrant corner and examines regional enrichment together
with oversampling, while E3 studies the dependence on the wavenumber.
We use manufactured solutions only to define the loads and evaluate
errors. 

In E1 and E2, we use 16 training loads and 24 held-out loads at $k=16$.
E1 uses the held-out family to compare nominal and family-driven marking,
whereas E2 uses it to assess reuse of the shared regional enrichment in
addition to studying regional correction and corrector localization. In this section, we report the relative energy error defined by
\[E_{\mu} = \frac{\norm{u_{\mu}-U_{\mu}}_{k}}{\norm{u_{\mu}}_k},\]
where we may omit $\mu$ when $\mu$ denotes the nominal member. 
To describe the observed decay, we also fit $ E\approx C N_{\mathrm{on}}^{-p}$, where $N_{\mathrm{on}}$ denotes the number of online degrees of freedom. 
We collect the principal settings in Table~\ref{tab:adaptive-config}. 

\begin{table}[H]
\caption{Principal ALOD settings. Levels count refinements of the
respective initial hierarchy. }
\label{tab:adaptive-config}\centering\small
\begin{tabular}{@{}lccc@{}}\toprule
Setting & E1 & E2 & E3\\\midrule
Wavenumber $k$ & 16 & 16 & $8,16,32,64,128$\\
Initial coarse/fine levels & $6/10$ & $6/10$ & \S\ref{sec:num-wavenumber}\\
Coarse marking $\theta_H$ & \multicolumn{3}{c}{$0.15$}\\
Fine refinements per cycle & \multicolumn{3}{c}{$m_{\rm ref}=2$}\\
Fine marking fractions & \multicolumn{3}{c}{$(0.3,0.2)$}\\
Initial $\ell$ / maximum $\ell_{\max}$ & \multicolumn{3}{c}{$2/4$}\\
Balance threshold $\tau_q$ & \multicolumn{3}{c}{$0.3$}\\
Defect threshold $\tau_\Theta$ & \multicolumn{3}{c}{$0.2$}\\
Scale floor $d_{\min}$ & \multicolumn{3}{c}{$10^{-12}$}\\
\bottomrule\end{tabular}
\end{table}

\FloatBarrier
\subsection{E1: localized smooth oscillations}\label{sec:num-smooth}
Our first example is posed on $\Omega=(0,1)^2$, with Dirichlet
conditions on $y=0,1$, a Neumann condition on $x=0$, and a Robin
condition on $x=1$. For a packet centre $z=(z_x,z_y)$, we define
\begin{equation}\label{eq:smooth-family}
 \begin{aligned}
 g_z(x,y)&=x^2(1-x)^2\sin(\pi y)
               e^{-80((x-z_x)^2+(y-z_y)^2)},\\
 u_z(x,y)&=g_z(x,y)e^{\ii kx},\qquad
 f_z=-\Delta u_z-k^2u_z,\qquad |z-z_0|\leq0.05,
 \end{aligned}
\end{equation}
where $z_0=(3/4,1/2)$ is the nominal centre. The prefactor enforces
all homogeneous boundary conditions. 

\begin{figure}[H]
\centering\includegraphics[width=\textwidth]{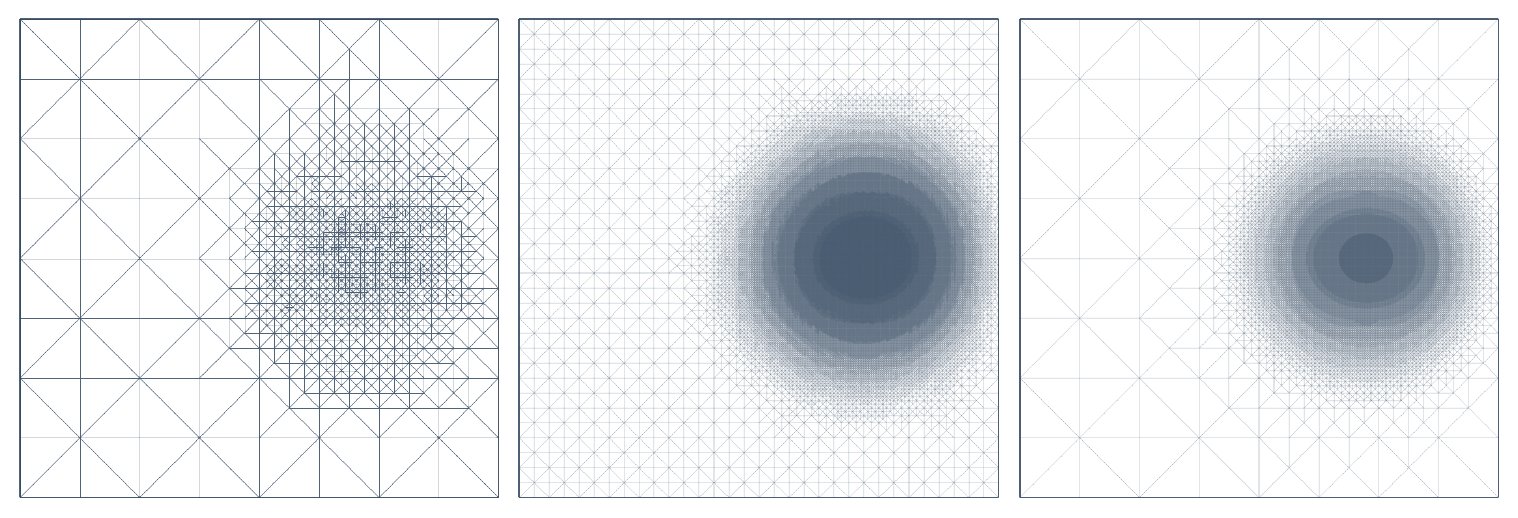}
\caption{Final E1 meshes, from left: ALOD coarse, ALOD fine, and AFEM.}
\label{fig:e1-final-meshes}
\end{figure}

Figure~\ref{fig:e1-error-dof} shows the error as a function of online
dimension. ALOD exhibits faster decay than the comparison methods, and
the oversampling level increases from $\ell=2$ to $\ell=3$ during the
run. Since fine-only refinements can reduce the error without changing
$N_{\rm on}$, we interpret this decay as online compression rather
than a convergence order in the total number of unknowns.

\begin{figure}[H]
\centering\includegraphics[width=\textwidth,height=0.28\textheight,keepaspectratio]{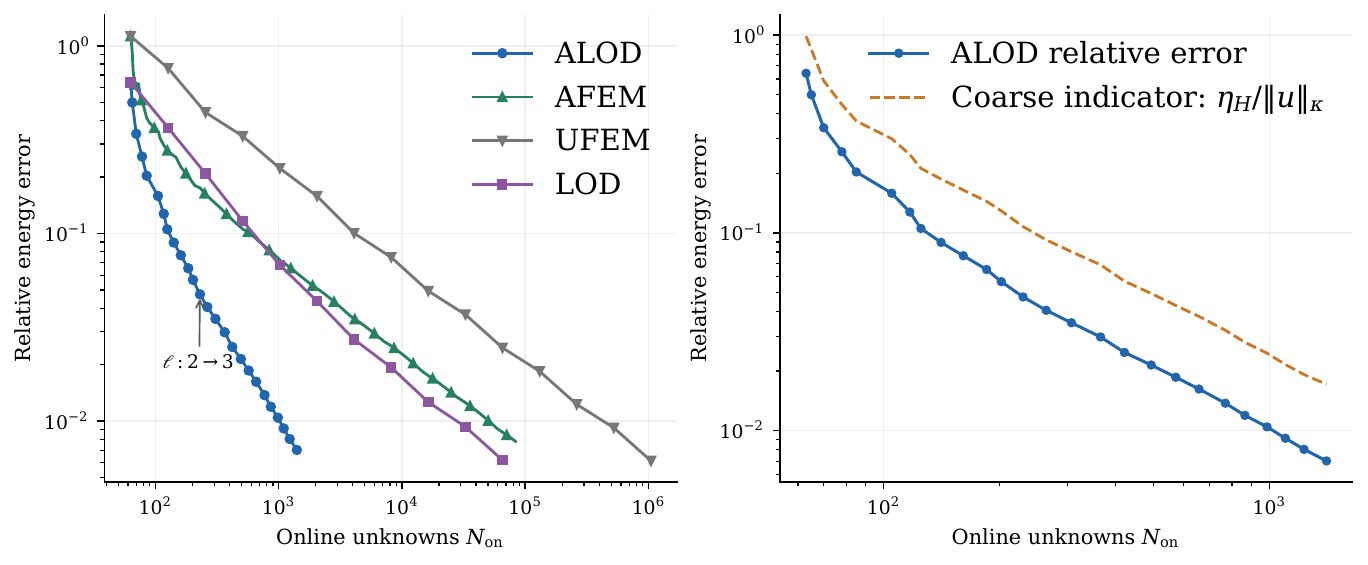}
\caption{E1 convergence. The left panel shows ALOD and the comparison
trajectories, with the arrow marking $\ell:2\to3$. The right panel shows
the ALOD relative error and coarse residual indicator. }
\label{fig:e1-error-dof}
\end{figure}

The target crossings in Table~\ref{tab:e1-terminal-results} quantify
this reduction in online dimension. ALOD first reaches $E\leq0.01$ at
$N_{\rm on}=1100$, compared with 54,549 for AFEM and 32,767 for LOD.
The fitted decay exponent of ALOD is 1.105, whereas those of the comparison methods
are close to one half.

\begin{table}[H]
\caption{E1 first target crossings and plotted endpoints. Tail fits use
five complete cycles for ALOD and the last eight, eight, and six states
for AFEM, UFEM, and LOD, respectively.}
\label{tab:e1-terminal-results}\centering\small
\setlength{\tabcolsep}{4pt}
\begin{tabular}{@{}lrrrrr@{}}\toprule
 & \multicolumn{2}{c}{First crossing $N_{\rm on}$}
 & \multicolumn{3}{c}{Plotted endpoint}\\
\cmidrule(lr){2-3}\cmidrule(l){4-6}
Method & $E\leq0.02$ & $E\leq0.01$ & $N_{\rm on}$ & $E$ & $p$ ($R^2$)\\\midrule
ALOD & 572 & 1100 & 1406 & $7.026\times10^{-3}$ & 1.105 (0.9989)\\
AFEM & 13528 & 54549 & 83557 & $7.799\times10^{-3}$ & 0.504 (0.9966)\\
UFEM & 131071 & 524287 & 1048575 & $6.140\times10^{-3}$ & 0.506 (0.9987)\\
LOD & 8191 & 32767 & 65535 & $6.192\times10^{-3}$ & 0.553 (0.9972)\\
\bottomrule\end{tabular}
\end{table}

To isolate the effect of family-aware marking, we repeat E1 with the same
configuration but keep the oversampling level fixed at $\ell=3$. We compare
the standard marking strategy, driven only by the nominal load, with the
family-aware strategy described in Section~\ref{sec:family-marking}. The
24 held-out packet locations are used only for evaluation and do not
participate in marking.

For each strategy, we record the first adaptive state for which
$N_{\rm on}\geq 2000$ and evaluate the nominal and held-out errors on the
corresponding space. Let $\mathcal P_{\rm test}$ denote the held-out set, and
let $Q^{\rm test}_{0.5}$, $Q^{\rm test}_{0.9}$, and
$E^{\rm test}_{\max}$ denote the median, 90th percentile, and maximum of
$\{E_\mu:\mu\in\mathcal P_{\rm test}\}$, respectively.
Table~\ref{tab:e1-family-control} shows that family-aware marking leaves the
nominal accuracy essentially unchanged while reducing the held-out errors.
The improvement is moderate because the family consists only of small
translations of the same localized packet, but the comparison shows that
incorporating the training family into the marking strategy improves the
robustness of the resulting space across nearby loads.

\begin{table}[H]
\caption{E1 comparison of nominal-load and family-aware marking with
$\ell=3$. For each method, errors are reported at the first adaptive state
with $N_{\rm on}\geq2000$. All error entries are percentages.}
\label{tab:e1-family-control}
\centering
\small
\begin{tabular}{@{}lrrrr@{}}
\toprule
Method & $E_{\rm nom}$ & $Q^{\rm test}_{0.5}$ &
$Q^{\rm test}_{0.9}$ & $E^{\rm test}_{\max}$\\
\midrule
Nominal-ALOD & 0.477 & 0.538 & 0.574 & 0.615\\
Family-ALOD  & 0.479 & 0.524 & 0.548 & 0.592\\
Nominal-AFEM & 5.151 & 5.546 & 5.795 & 5.981\\
Family-AFEM  & 5.100 & 5.417 & 5.608 & 5.805\\
\bottomrule
\end{tabular}
\end{table}

\FloatBarrier
\subsection{E2: a reentrant corner, regional enrichment, and oversampling}\label{sec:num-lshape}
We now turn to the L-shaped domain
$\Omega=(-1,1)^2\setminus([0,1]\times[-1,0])$ to examine the effect
of a corner singularity. In polar coordinates $(\varrho,\theta)$
about the reentrant corner, the angular range is $0<\theta<3\pi/2$.
The two edges meeting at the corner carry Dirichlet conditions, while
the remaining boundary carries Robin conditions:
\[
 \Gamma_D=\{(x,0):0<x<1\}\cup\{(0,y):-1<y<0\},\qquad
 \Gamma_N=\varnothing,\qquad
 \Gamma_R=\partial\Omega\setminus\overline{\Gamma_D}.
\]
We combine the corner singularity with a localized smooth packet using
\begin{align*}
 b_\partial(x,y)&=(1-x^2)^2(1-y^2)^2,&
 u_{\rm sing}&=b_\partial\varrho^{2/3}\sin(2\theta/3),\\
 A_z(x,y)&=\frac{xyb_\partial(x,y)}{z_xz_yb_\partial(z_x,z_y)}
                 e^{-80((x-z_x)^2+(y-z_y)^2)}.
\end{align*}
The resulting manufactured family and its volume loads are
\begin{equation*}
 u_\mu=c_\mu u_{\rm sing}
       +\alpha_\mu A_{z_\mu}e^{\ii(k(x-z_{\mu,x})+\varphi_\mu)},
 \qquad f_\mu=-\Delta u_\mu-k^2u_\mu,
\end{equation*}
with $c_\mu\in[0.75,1.25]$, $|z_\mu-z_0|\leq0.05$,
$\alpha_\mu\in[0.25,0.75]$, $\varphi_\mu\in[-\pi/4,\pi/4]$, and
$z_0=(-1/2,1/2)$. The nominal member has
$(c,\alpha,z,\varphi)=(1,1/2,z_0,0)$.
The boundary factor preserves the leading corner singularity while
enforcing homogeneous outer boundary data. The sine and $xy$ factors
ensure the Dirichlet conditions. Thus the singular component has a
fixed spatial shape with varying amplitude, whereas the smooth packet
varies in position, amplitude, and phase.

We restrict enrichment to $D=\Omega\cap B_{0.6}(0)$, where $B_r(x)$
denotes the open disk of radius $r$ centred at $x$. The accepted shared
enrichment uses kernel-lifted test functions defined in \eqref{eq:enriched-spaces} and has rank two throughout
the reported run.

\begin{figure}[!htbp]
\setlength{\abovecaptionskip}{4pt}
\centering\includegraphics[width=\textwidth]{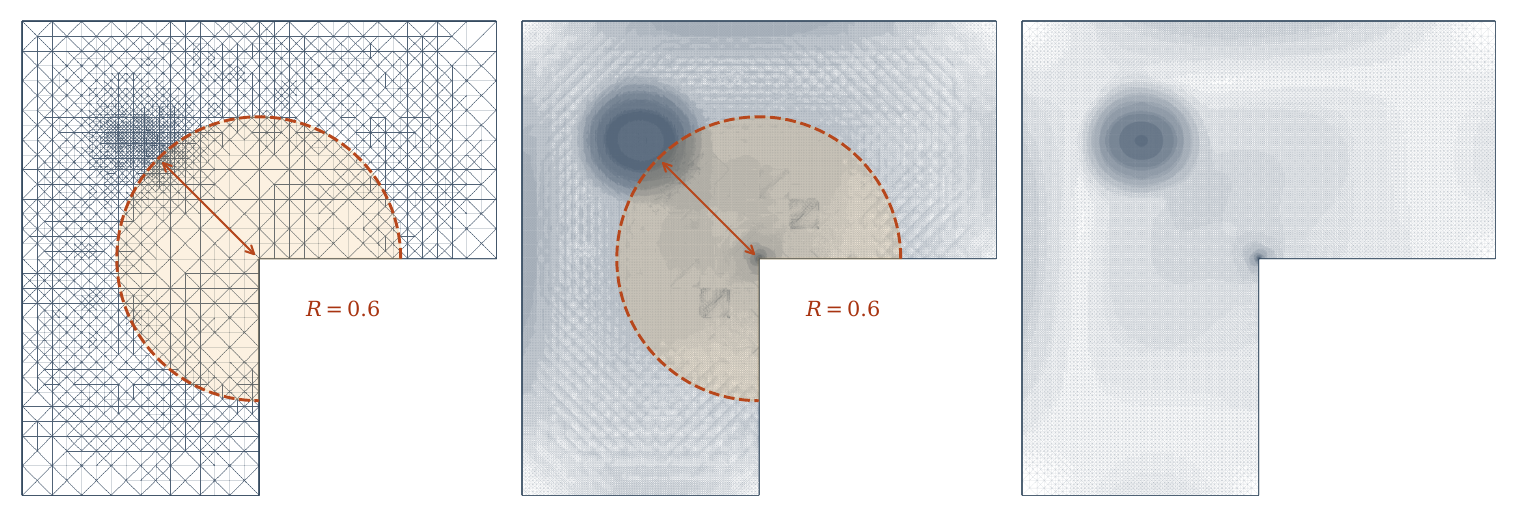}
\caption{Final E2 meshes, from left: ALOD coarse, ALOD fine, and AFEM.
Shading and the dashed arc mark $D=\Omega\cap B_{0.6}(0)$.}
\label{fig:e2-final-meshes}
\end{figure}

The nominal-load convergence is shown in
Figure~\ref{fig:e2-error-dof}. Regional enrichment captures the
corner-dominated correction, while the oversampling level increases
from $\ell=2$ to $\ell=3$ as refinement proceeds. These two mechanisms
address different sources of error, which we examine separately in
the fixed-mesh comparisons below.

\begin{figure}[!htbp]
\setlength{\abovecaptionskip}{4pt}
\centering\includegraphics[width=\textwidth,height=0.28\textheight,keepaspectratio]{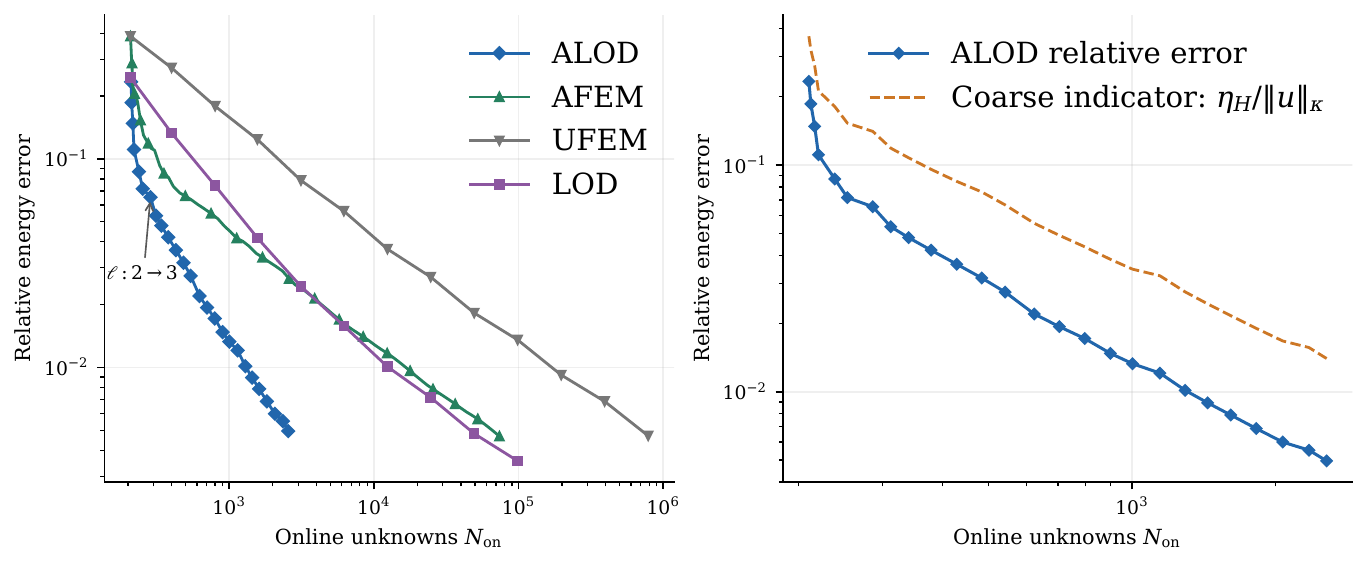}
\caption{E2 convergence. The left panel shows ALOD and the comparison
trajectories, with the arrow marking $\ell:2\to3$. The right panel shows the ALOD
relative error and coarse residual indicator.}
\label{fig:e2-error-dof}
\end{figure}

Table~\ref{tab:e2-terminal-results} summarizes the displayed
trajectories. ALOD reaches $E=0.004952$ with 2561 online unknowns and
has a fitted decay exponent of 1.066. As in E1, the comparison methods have
decay exponents close to one half.

\begin{table}[!htbp]
\caption{E2 plotted endpoints and tail fits. Fits use the last ten complete ALOD
cycles and the last ten, ten, and six displayed states for AFEM, UFEM,
and LOD, respectively.}
\label{tab:e2-terminal-results}\centering\small
\begin{tabular}{@{}lrrr@{}}\toprule
Method & $N_{\rm on}$ & $E$ & $p$ ($R^2$)\\\midrule
ALOD & 2561 & 0.004952 & 1.066 (0.9965)\\
AFEM & 73844 & 0.004678 & 0.495 (0.9982)\\
UFEM & 787456 & 0.004701 & 0.517 (0.9985)\\
LOD & 98560 & 0.003564 & 0.561 (0.9958)\\
\bottomrule\end{tabular}
\end{table}

We next separate the effects of regional enrichment and oversampling on a fixed mesh pair.
To distinguish the effect of regional enrichment from that of mesh
refinement, we first compare the base and enriched solutions on a fixed
mesh pair at $\ell=3$. Adding two regional modes reduces the nominal
error from $1.6373\%$ to $0.4958\%$. Here both the mesh pair and the
corrector depth remain unchanged, and the raw corrections are supported
near the reentrant corner. The reduction therefore demonstrates the
ability of regional enrichment to capture the corner-dominated
approximation error. 

With regional enrichment already included, increasing the oversampling
level from $\ell=2$ to $\ell=3$ reduces the nominal error by $8.97\%$
on the same fixed mesh pair. The adaptive run provides complementary
evidence through the localization-balancing indicator. The increase
in oversampling is triggered at $\widetilde q_\ell=0.3807$, and the
immediate recheck gives $\widetilde q_\ell=0.0450$. The fixed-mesh
comparison measures a decrease in the actual error, whereas the
adaptive recheck records a decrease in the balance indicator.
Together, these observations support the distinct roles of the two
mechanisms. Regional enrichment captures the singular correction,
while larger corrector patches suppress the remaining localization
error. 

Finally, we freeze the full-run final spaces and reuse them for the 24
held-out E2 loads. For ALOD, the nominal,
median, 90th-percentile, and maximum held-out errors are $0.004952$, $0.005100$,
$0.005537$, and $0.005760$, respectively, so the worst held-out error is
$1.163$ times the nominal error. The corresponding ratios are $1.356$ for AFEM
and $1.426$ for both UFEM and LOD. Although the final dimensions and nominal
accuracies differ across the methods, these results show that the shared
rank-two correction transfers well across the prescribed E2 load family.

\FloatBarrier
\subsection{E3: wavenumber dependence}\label{sec:num-wavenumber}
In the final experiment, we return to the E1 domain and vary the
wavenumber while keeping the packet envelope fixed. With $g_{z_0}$
from \eqref{eq:smooth-family}, we take
\begin{align*}
 u_k&=g_{z_0}e^{\ii kx},\qquad
 f_k=e^{\ii kx}(-\Delta g_{z_0}-2\ii k\partial_x g_{z_0}),\\
 &\hspace{35mm} k\in\{8,16,32,64,128\}.
\end{align*}
We construct a separate operator and set of correctors for each
wavenumber, without regional enrichment or cross-wavenumber reuse.
The initial hierarchy satisfies $kH_0=2\sqrt2$ and $k^3h_0^2=8$,
where $H_0$ and $h_0$ are the initial uniform mesh diameters, so
$h_0\propto k^{-3/2}$. All ALOD runs are initialized with $\ell=2$ and use the same localization
balance threshold $\tau_q=0.3$. The oversampling level is increased to
$\ell=3$ after $14$, $12$, $8$, and $4$ complete coarse-refinement cycles
for $k=8,16,32$, and $64$, respectively. For $k=128$, the initial balance
indicator is $\widetilde q_\ell\approx0.4102$, already above the prescribed
threshold. Oversampling is therefore increased before the first accepted
state, after which the indicator drops to approximately $0.04873$. 

In Figure~\ref{fig:kappa-error-dof}, we plot the error against
$N_{\rm on}/k^2$ to compare online dimensions across wavenumbers.
ALOD reaches the $0.01$ target at every tested wavenumber. 

\begin{figure}[H]
\centering\includegraphics[width=\textwidth,height=0.28\textheight,keepaspectratio]{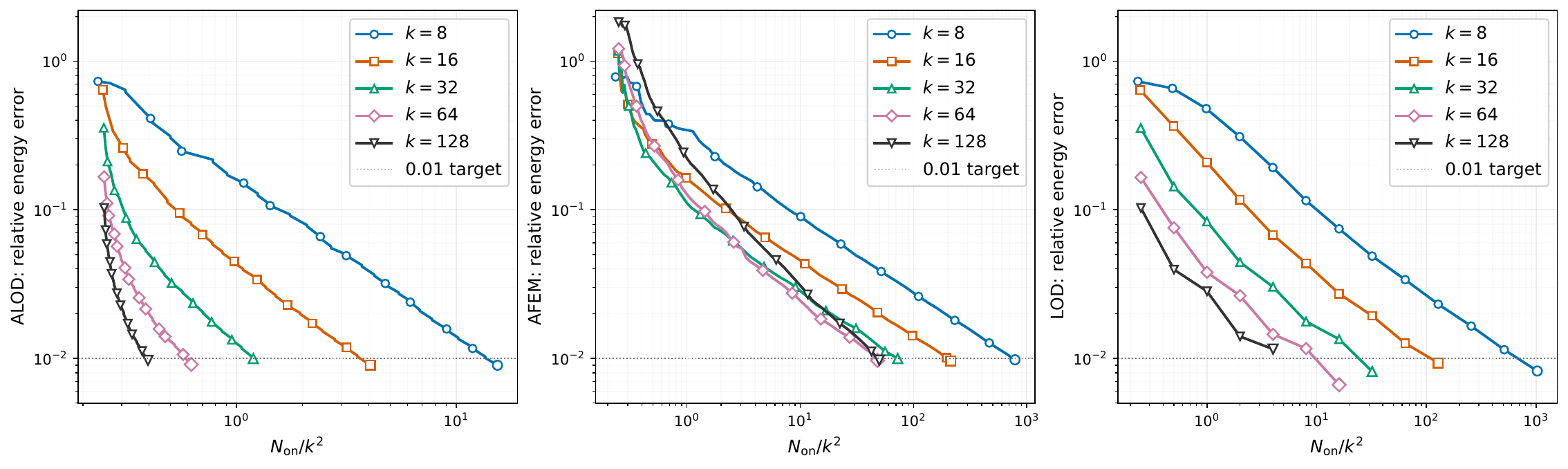}
\caption{E3 relative energy error versus $N_{\rm on}/k^2$, with ALOD,
AFEM, and LOD shown from left to right. The dotted line marks $0.01$. }
\label{fig:kappa-error-dof}
\end{figure}

Table~\ref{tab:kappa-target-summary} records the first $0.01$ crossings.
Across the tested range, the online dimension increases substantially more slowly than \(k^2\), so that \(N_{\rm on}/k^2\) decreases as the wavenumber grows. This indicates that, for the present manufactured family, the ALOD construction does not exhibit an additional pollution-driven growth of the online space over the tested range. 

\begin{table}[H]
\caption{E3 first $0.01$ crossings. Each pair gives online dimension and
relative error in percent. The entry marked $\dagger$ is the final
completed LOD state before the memory limit.}
\label{tab:kappa-target-summary}\centering\small
\setlength{\tabcolsep}{5pt}
\begin{tabular}{@{}rrrrrrr@{}}\toprule
&\multicolumn{2}{c}{ALOD}&\multicolumn{2}{c}{AFEM}&\multicolumn{2}{c}{LOD}\\
\cmidrule(lr){2-3}\cmidrule(lr){4-5}\cmidrule(l){6-7}
$k$ & $N_{\rm on}$ & $100E$ & $N_{\rm on}$ & $100E$ & $N_{\rm on}$ & $100E$\\\midrule
8 & 985 & 0.8992 & 50288 & 0.9786 & 65535 & 0.8222\\
16 & 1039 & 0.9007 & 54549 & 0.9595 & 32767 & 0.9333\\
32 & 1220 & 0.9975 & 74239 & 0.9984 & 32767 & 0.8167\\
64 & 2549 & 0.9096 & 197070 & 0.9688 & 65535 & 0.6640\\
128 & 6478 & 0.9701 & 817056 & 0.9731 & 65535 & $1.1536^\dagger$\\
\bottomrule\end{tabular}
\end{table}

\FloatBarrier
\section{Conclusions}\label{sec:conclusions}
ALOD separates the online coarse space from the fine-scale resolution used
for its construction. Local residual Riesz representatives guide adaptivity
and provide residual and corrector-localization control. For load families,
they also generate a shared regional enrichment. Galerkin orthogonality on
the original LOD test space extends the fine-grid error estimate to enriched
solutions. Kernel-lifted tests provide stable and quasi-optimal coupling
under the unified oversampling condition, without additional global
fine-grid solves. 

The experiments demonstrate online compression and rank-two reuse for
the prescribed families at $k=16$, and online compression for a
single smooth load over $k=8,16,32,64,128$. The analysis can be extended to the variable-coefficient problem
\begin{equation*}
 -\nabla\cdot(\alpha\nabla u)-k^2nu=f,
\end{equation*}
where $\alpha$ and $n$ are fixed positive scalar-valued coefficients, with
constants depending on their uniform bounds. Related multiscale spectral
methods for heterogeneous Helmholtz problems are studied in
\cite{BrownGallistlPeterseim2017,PeterseimVerfurth2020,
MaAlberScheichl2023}.
Obtaining estimates robust with respect to coefficient contrast, however,
would require coefficient-adapted interpolation and stable-decomposition
arguments and is left for future work. A complementary direction is to determine how
the required enrichment rank grows when the load family contains several
independent singular components.


\begin{thebibliography}{10}

\bibitem{AbdulleHenning2015}
{\sc A.~Abdulle and P.~Henning}, {\em A reduced basis localized orthogonal
  decomposition}, J. Comput. Phys., 295 (2015), pp.~379--401,
  \url{https://doi.org/10.1016/j.jcp.2015.04.016}.

\bibitem{BabuskaMelenk1997}
{\sc I.~Babu{\v{s}}ka and J.~M. Melenk}, {\em The partition of unity method},
  Internat. J. Numer. Methods Engrg., 40 (1997), pp.~727--758,
  \url{https://doi.org/10.1002/(SICI)1097-0207(19970228)40:4<727::AID-NME86>3.0.CO;2-N}.

\bibitem{BabuskaSauter1997}
{\sc I.~Babu{\v{s}}ka and S.~A. Sauter}, {\em Is the pollution effect of the
  {FEM} avoidable for the {Helmholtz} equation considering high wave numbers?},
  SIAM J. Numer. Anal., 34 (1997), pp.~2392--2423,
  \url{https://doi.org/10.1137/S0036142994269186}.

\bibitem{BespalovHaberlPraetorius2017}
{\sc A.~Bespalov, A.~Haberl, and D.~Praetorius}, {\em Adaptive {FEM} with
  coarse initial mesh guarantees optimal convergence rates for compactly
  perturbed elliptic problems}, Comput. Methods Appl. Mech. Engrg., 317 (2017),
  pp.~318--340, \url{https://doi.org/10.1016/j.cma.2016.12.014}.

\bibitem{BinevEtAl2011}
{\sc P.~Binev, A.~Cohen, W.~Dahmen, R.~DeVore, G.~Petrova, and P.~Wojtaszczyk},
  {\em Convergence rates for greedy algorithms in reduced basis methods}, SIAM
  J. Math. Anal., 43 (2011), pp.~1457--1472,
  \url{https://doi.org/10.1137/100795772}.

\bibitem{BoffiBrezziFortin2013}
{\sc D.~Boffi, F.~Brezzi, and M.~Fortin}, {\em Mixed Finite Element Methods and
  Applications}, vol.~44 of Springer Series in Computational Mathematics,
  Springer, Heidelberg, 2013.

\bibitem{BrownGallistlPeterseim2017}
{\sc D.~L. Brown, D.~Gallistl, and D.~Peterseim}, {\em Multiscale
  {Petrov--Galerkin} method for high-frequency heterogeneous {Helmholtz}
  equations}, in Meshfree Methods for Partial Differential Equations VII,
  vol.~115 of Lecture Notes in Computational Science and Engineering, Springer,
  2017, pp.~85--115, \url{https://doi.org/10.1007/978-3-319-51954-8_6}.

\bibitem{BuhrEngwerOhlbergerRave2017}
{\sc A.~Buhr, C.~Engwer, M.~Ohlberger, and S.~Rave}, {\em {ArbiLoMod}, a
  simulation technique designed for arbitrary local modifications}, SIAM J.
  Sci. Comput., 39 (2017), pp.~A1435--A1465,
  \url{https://doi.org/10.1137/15M1054213}.

\bibitem{CamargoRojasVega2025}
{\sc L.~Camargo, S.~Rojas, and P.~Vega}, {\em Minimum-residual a posteriori
  error estimates for {HDG} discretizations of the {Helmholtz} equation},
  Comput. Methods Appl. Mech. Engrg., 441 (2025), p.~117981,
  \url{https://doi.org/10.1016/j.cma.2025.117981}.

\bibitem{ChaumontFreletErnVohralik2021}
{\sc T.~Chaumont-Frelet, A.~Ern, and M.~Vohral{\'i}k}, {\em On the derivation
  of guaranteed and {$p$}-robust a posteriori error estimates for the
  {Helmholtz} equation}, Numer. Math., 148 (2021), pp.~525--573,
  \url{https://doi.org/10.1007/s00211-021-01192-w}.

\bibitem{Doerfler1996}
{\sc W.~D{\"o}rfler}, {\em A convergent adaptive algorithm for {Poisson}'s
  equation}, SIAM J. Numer. Anal., 33 (1996), pp.~1106--1124,
  \url{https://doi.org/10.1137/0733054}.

\bibitem{DuWu2015}
{\sc Y.~Du and H.~Wu}, {\em Preasymptotic error analysis of higher order {FEM}
  and {CIP-FEM} for {Helmholtz} equation with high wave number}, SIAM J. Numer.
  Anal., 53 (2015), pp.~782--804, \url{https://doi.org/10.1137/140953125}.

\bibitem{DuanWu2023}
{\sc S.~Duan and H.~Wu}, {\em Adaptive {FEM} for {Helmholtz} equation with
  large wavenumber}, J. Sci. Comput., 94 (2023), p.~21,
  \url{https://doi.org/10.1007/s10915-022-02074-5}.

\bibitem{EngwerHenningMalqvistPeterseim2019}
{\sc C.~Engwer, P.~Henning, A.~M{\aa}lqvist, and D.~Peterseim}, {\em Efficient
  implementation of the localized orthogonal decomposition method}, Comput.
  Methods Appl. Mech. Engrg., 350 (2019), pp.~123--153,
  \url{https://doi.org/10.1016/j.cma.2019.02.040}.

\bibitem{ErnGuermond2017}
{\sc A.~Ern and J.-L. Guermond}, {\em Finite element quasi-interpolation and
  best approximation}, ESAIM Math. Model. Numer. Anal., 51 (2017),
  pp.~1367--1385.

\bibitem{EsterhazyMelenk2012}
{\sc S.~Esterhazy and J.~M. Melenk}, {\em On stability of discretizations of
  the {Helmholtz} equation}, in Numerical Analysis of Multiscale Problems,
  vol.~83 of Lecture Notes in Computational Science and Engineering, Springer,
  Heidelberg, 2012, pp.~285--324.

\bibitem{FreeseHauckPeterseim2024}
{\sc P.~Freese, M.~Hauck, and D.~Peterseim}, {\em Super-localized orthogonal
  decomposition for high-frequency {Helmholtz} problems}, SIAM J. Sci. Comput.,
  46 (2024), pp.~A2377--A2397, \url{https://doi.org/10.1137/21M1465950}.

\bibitem{GalkowskiSpence2025}
{\sc J.~Galkowski and E.~A. Spence}, {\em Sharp preasymptotic error bounds for
  the {Helmholtz} {$h$}-{FEM}}, SIAM J. Numer. Anal., 63 (2025), pp.~1--22,
  \url{https://doi.org/10.1137/23M1546178}.

\bibitem{GallistlPeterseim2015}
{\sc D.~Gallistl and D.~Peterseim}, {\em Stable multiscale {Petrov--Galerkin}
  finite element method for high frequency acoustic scattering}, Comput.
  Methods Appl. Mech. Engrg., 295 (2015), pp.~1--17,
  \url{https://doi.org/10.1016/j.cma.2015.06.017}.

\bibitem{HauckPeterseim2022}
{\sc M.~Hauck and D.~Peterseim}, {\em Multi-resolution localized orthogonal
  decomposition for {Helmholtz} problems}, Multiscale Model. Simul., 20 (2022),
  pp.~657--684, \url{https://doi.org/10.1137/21M1414607}.

\bibitem{HellmanMalqvist2017}
{\sc F.~Hellman and A.~M{\aa}lqvist}, {\em Contrast independent localization of
  multiscale problems}, Multiscale Model. Simul., 15 (2017), pp.~1325--1355,
  \url{https://doi.org/10.1137/16M1100460}.

\bibitem{HenningMalqvist2014}
{\sc P.~Henning and A.~M{\aa}lqvist}, {\em Localized orthogonal decomposition
  techniques for boundary value problems}, SIAM J. Sci. Comput., 36 (2014),
  pp.~A1609--A1634, \url{https://doi.org/10.1137/130933198}.

\bibitem{KeilRave2023}
{\sc T.~Keil and S.~Rave}, {\em An online efficient two-scale reduced basis
  approach for the localized orthogonal decomposition}, SIAM J. Sci. Comput.,
  45 (2023), pp.~A1491--A1518, \url{https://doi.org/10.1137/21M1460016}.

\bibitem{KornhuberPeterseimYserentant2018}
{\sc R.~Kornhuber, D.~Peterseim, and H.~Yserentant}, {\em An analysis of a
  class of variational multiscale methods based on subspace decomposition},
  Math. Comp., 87 (2018), pp.~2765--2774.

\bibitem{KunischVolkwein2002}
{\sc K.~Kunisch and S.~Volkwein}, {\em {Galerkin} proper orthogonal
  decomposition methods for a general equation in fluid dynamics}, SIAM J.
  Numer. Anal., 40 (2002), pp.~492--515,
  \url{https://doi.org/10.1137/S0036142900382612}.

\bibitem{LafontaineSpenceWunsch2022}
{\sc D.~Lafontaine, E.~A. Spence, and J.~Wunsch}, {\em A sharp relative-error
  bound for the {Helmholtz} {$h$}-{FEM} at high frequency}, Numer. Math., 150
  (2022), pp.~137--178, \url{https://doi.org/10.1007/s00211-021-01253-0}.

\bibitem{LiShui2026}
{\sc Y.~Li and H.~Shui}, {\em Smoother-type a posteriori error estimates for
  finite element methods}, Comput. Methods Appl. Mech. Engrg., 453 (2026),
  p.~118847, \url{https://doi.org/10.1016/j.cma.2026.118847}.

\bibitem{LiZikatanov2021}
{\sc Y.~Li and L.~T. Zikatanov}, {\em A posteriori error estimates of finite
  element methods by preconditioning}, Comput. Math. Appl., 91 (2021),
  pp.~192--201, \url{https://doi.org/10.1016/j.camwa.2020.08.001}.

\bibitem{LiZikatanov2025}
{\sc Y.~Li and L.~T. Zikatanov}, {\em Nodal auxiliary a posteriori error
  estimates}, Math. Comp.,  (2025), \url{https://doi.org/10.1090/mcom/4141}.
\newblock Published online.

\bibitem{MaAlberScheichl2023}
{\sc C.~Ma, C.~Alber, and R.~Scheichl}, {\em Wavenumber explicit convergence of
  a multiscale generalized finite element method for heterogeneous {Helmholtz}
  problems}, SIAM J. Numer. Anal., 61 (2023), pp.~1546--1584,
  \url{https://doi.org/10.1137/21M1466748}.

\bibitem{MalqvistPeterseim2014}
{\sc A.~M{\aa}lqvist and D.~Peterseim}, {\em Localization of elliptic
  multiscale problems}, Math. Comp., 83 (2014), pp.~2583--2603,
  \url{https://doi.org/10.1090/S0025-5718-2014-02868-8}.

\bibitem{MalqvistPeterseim2020}
{\sc A.~M{\aa}lqvist and D.~Peterseim}, {\em Numerical Homogenization by
  Localized Orthogonal Decomposition}, vol.~5 of SIAM Spotlights, Society for
  Industrial and Applied Mathematics, Philadelphia, PA, 2020,
  \url{https://doi.org/10.1137/1.9781611976458}.

\bibitem{MelenkBabuska1996}
{\sc J.~M. Melenk and I.~Babu{\v{s}}ka}, {\em The partition of unity finite
  element method: Basic theory and applications}, Comput. Methods Appl. Mech.
  Engrg., 139 (1996), pp.~289--314,
  \url{https://doi.org/10.1016/S0045-7825(96)01087-0}.

\bibitem{MelenkSauter2011}
{\sc J.~M. Melenk and S.~Sauter}, {\em Wavenumber explicit convergence analysis
  for {Galerkin} discretizations of the {Helmholtz} equation}, SIAM J. Numer.
  Anal., 49 (2011), pp.~1210--1243, \url{https://doi.org/10.1137/090776202}.

\bibitem{OhlbergerSchindler2015}
{\sc M.~Ohlberger and F.~Schindler}, {\em Error control for the localized
  reduced basis multiscale method with adaptive on-line enrichment}, SIAM J.
  Sci. Comput., 37 (2015), pp.~A2865--A2895,
  \url{https://doi.org/10.1137/151003660}.

\bibitem{Peterseim2017}
{\sc D.~Peterseim}, {\em Eliminating the pollution effect in {Helmholtz}
  problems by local subscale correction}, Math. Comp., 86 (2017),
  pp.~1005--1036, \url{https://doi.org/10.1090/mcom/3156}.

\bibitem{PeterseimScheichl2016}
{\sc D.~Peterseim and R.~Scheichl}, {\em Robust numerical upscaling of elliptic
  multiscale problems at high contrast}, Comput. Methods Appl. Math., 16
  (2016), pp.~579--603, \url{https://doi.org/10.1515/cmam-2016-0022}.

\bibitem{PeterseimVerfurth2020}
{\sc D.~Peterseim and B.~Verf{\"u}rth}, {\em Computational high frequency
  scattering from high-contrast heterogeneous media}, Math. Comp., 89 (2020),
  pp.~2649--2674, \url{https://doi.org/10.1090/mcom/3529}.

\bibitem{PrudhommeEtAl2002}
{\sc C.~Prud'homme, D.~V. Rovas, K.~Veroy, L.~Machiels, Y.~Maday, A.~T. Patera,
  and G.~Turinici}, {\em Reliable real-time solution of parametrized partial
  differential equations: Reduced-basis output bound methods}, J. Fluids Eng.,
  124 (2002), pp.~70--80, \url{https://doi.org/10.1115/1.1448332}.

\bibitem{RozzaHuynhPatera2008}
{\sc G.~Rozza, D.~B.~P. Huynh, and A.~T. Patera}, {\em Reduced basis
  approximation and a posteriori error estimation for affinely parametrized
  elliptic coercive partial differential equations}, Arch. Comput. Methods
  Eng., 15 (2008), pp.~229--275,
  \url{https://doi.org/10.1007/s11831-008-9019-9}.

\bibitem{ScottZhang1990}
{\sc L.~R. Scott and S.~Zhang}, {\em Finite element interpolation of nonsmooth
  functions satisfying boundary conditions}, Math. Comp., 54 (1990),
  pp.~483--493, \url{https://doi.org/10.1090/S0025-5718-1990-1011446-7}.

\bibitem{Stevenson2008}
{\sc R.~Stevenson}, {\em The completion of locally refined simplicial
  partitions created by bisection}, Math. Comp., 77 (2008), pp.~227--241,
  \url{https://doi.org/10.1090/S0025-5718-07-01959-X}.

\bibitem{Verfurth2013}
{\sc R.~Verf{\"u}rth}, {\em A Posteriori Error Estimation Techniques for Finite
  Element Methods}, Oxford University Press, Oxford, 2013,
  \url{https://doi.org/10.1093/acprof:oso/9780199679423.001.0001}.

\bibitem{Wu2014}
{\sc H.~Wu}, {\em Pre-asymptotic error analysis of {CIP-FEM} and {FEM} for the
  {Helmholtz} equation with high wave number. {Part I}: Linear version}, IMA J.
  Numer. Anal., 34 (2014), pp.~1266--1288,
  \url{https://doi.org/10.1093/imanum/drt033}.

\bibitem{Xu1992}
{\sc J.~Xu}, {\em Iterative methods by space decomposition and subspace
  correction}, SIAM Rev., 34 (1992), pp.~581--613,
  \url{https://doi.org/10.1137/1034116}.

\bibitem{ZhangDengWu2022}
{\sc K.~Zhang, W.~Deng, and H.~Wu}, {\em A combined multiscale finite element
  method based on the {LOD} technique for the multiscale elliptic problems with
  singularities}, J. Comput. Phys., 469 (2022), p.~111540,
  \url{https://doi.org/10.1016/j.jcp.2022.111540}.

\end{thebibliography}
\end{document}